\renewcommand\eqref[1]{(\ref{#1})} 
\title[Global existence and nonexistence of semilinear wave equation]{Global existence and nonexistence of semilinear wave equation with a new condition}
\author[Bolys Sabitbek]{Bolys Sabitbek}
\address{ \href{http://analysis-pde.org/bolys-sabitbek/}{Bolys Sabitbek:}
	\endgraf
	School of Mathematical Sciences
	\endgraf Queen Mary University of London 
	\endgraf
	United Kingdom
	\endgraf 
	and
	\endgraf 
	Al-Farabi Kazakh National University 
	\endgraf 
	Almaty,
	Kazakhstan 
	\endgraf
	{\it E-mail address} {\rm b.sabitbek@qmul.ac.uk}
}
\subjclass{35K71, 35B44, 35A01.}
\keywords{Semilinear wave equation, global existnece, blow-up, potential wells theory }
\thanks{ The author was supported by EPSRC grant EP/R003025/2. No new data was collected or generated during the course of this research.}
\newtheoremstyle{theorem}
{10pt}          
{10pt}  
{\sl}  
{\parindent}     
{\bf}  
{. }    
{ }    
{}     
\theoremstyle{theorem}
\numberwithin{equation}{section}
\theoremstyle{plain}
\newtheorem{thm}{Theorem}[section]
\newtheorem{cor}[thm]{Corollary}
\newtheorem{lem}[thm]{Lemma}
\theoremstyle{definition}
\newtheorem{defn}[thm]{Definition}
\newtheorem{rem}[thm]{Remark}
\newtheoremstyle{defi}
{10pt}          
{10pt}  
{\rm}  
{\parindent}     
{\bf}  
{. }    
{ }    
{}     
\theoremstyle{defi}
\begin{document}
\begin{abstract}
		In this paper, we consider the initial-boundary problem for semilinear wave equation with a new condition 
		\begin{equation*}
		\alpha \int_0^{u } f(s)ds \leq uf(u) + \beta u^2 +\alpha \sigma,
		\end{equation*}
		for some positive constants $\alpha$, $\beta$, and $\sigma$, where $\beta < \frac{\lambda_1(\alpha -2)}{2}$ with $\lambda_1$ being a first eigenvalue of Laplacian. By introducing a family of potential wells, we establish the invariant sets, vacuum isolation of solutions, global existence and blow-up solutions of semilinear wave equation for initial conditions $E(0)<d$ and $E(0)=d$.
	\end{abstract}
\maketitle
\tableofcontents
\section{Introduction}
\subsection{Setting problem}
In this paper,  we consider the initial-boundary problem of semilinear wave equation 
\begin{align}\label{Wave-problem}
\begin{cases}
u_{tt} - \Delta u  = f(u), \,\,\, & x \in \Omega,\,\, t>0, \\ 
u(x,0)  =u_0(x),\,\,\, u_t(x,0)  = u_1(x) &x \in \overline{\Omega}, \\
u(x,t)=0\,\,\, & x \in \partial \Omega, t\geq 0,
\end{cases}
\end{align}
where $\Omega$ is a bounded domain in $\mathbb{R}^n$. A total energy of problem \eqref{Wave-problem} is 
\begin{align*}
E(t)&=\frac{1}{2} \int_{\Omega} u^2_t dx +\frac{1}{2} \int_{\Omega} |\nabla u|^2 dx - \int_{\Omega} [F(u) - \sigma] dx,
\end{align*}
where $F(u)=\int_0^uf(s)ds$ and $\sigma$ is some positive constant. Then a law of conservation of energy states that 
\begin{equation}
E(t)=E(0):=\frac{1}{2} \int_{\Omega} u^2_1 dx +\frac{1}{2} \int_{\Omega} |\nabla u_0|^2 dx - \int_{\Omega} [F(u_0) - \sigma] dx.
\end{equation}
A simple computation gives
\begin{align*}
\frac{dE(t)}{dt} &=\frac{d}{dt}\left[ \frac{1}{2} \int_{\Omega} u^2_t dx + \frac{1}{2} \int_{\Omega} |\nabla u|^2 dx - \int_{\Omega} [F(u) - \sigma] dx  \right]\\
& = \int_{\Omega}u_t u_{tt}dx + \int_{\Omega} \nabla u \cdot \nabla u_t dx - \int_{\Omega} f(u)u_tdx \\
& =  \int_{\Omega} [  \Delta u + f(u) ]u_t dx - \int_{\Omega} \Delta u u_t dx - \int_{\Omega} f(u)u_tdx\\
& = 0. 
\end{align*}
In this paper we use the following notations:
\begin{align*}
||u||^2 := ||  u ||^2_{L^2(\Omega)} = \int_{\Omega } |u(x,t)|^2 dx, \,\, \text{  and } \,\,
||u||^{\gamma}:= ||  u ||^{\gamma}_{L^{\gamma}(\Omega)} =\int_{\Omega } |u(x,t)|^{\gamma} dx.
\end{align*}
\subsection{A new condition on nonlinearity} 
Let $f(u)$ satisfy the following assumptions:
\begin{itemize}
	\item[(i)] $f \in C^1$, $f(0)=f'(0)=0$. 
	\item[(ii)] (a) $f$ is monotone and convex for $u>0$, concave for $u<0$; \\ 
	(b) $f$ is convex for $-\infty < u < + \infty$.
	\item[(iii)]  Let $2<\alpha \leq \gamma <2n/(n-2)$, then 
	\begin{equation}
	|uf(u)|\leq \gamma| F(u)-\sigma|,
	\end{equation}
	and
	\begin{equation}\label{cont_1}
	\alpha F(u)\leq uf(u)+\beta u^2 + \alpha \sigma,
	\end{equation}  
	where $F(u) = \int_0^u f(s) ds $, $\sigma>0$, and $\beta < \frac{\lambda_1(\alpha -2)}{2}$ with $\lambda_1$ is the first eigenvalue of Laplacian. 
	In the case (b), \eqref{cont_1} holds for $u>0$.
\end{itemize}
For our convenience, these assumptions on the nonlinearity $f(u)$ denoted as $(H)$. Also, we have two cases such $(H)-(a)$ and $(H)-(b)$, respectively. For example, $f(u)=|u|^{p-1}u$ is the case $(H)-(a)$ and $f(u)=u^p$ is the case  $(H)-(b)$.
From the assumption $(H)$ one leads the lemma that describes the properties of nonlinearity $f(u)$.
\begin{lem}
	Let $f(u)$ satisfy $(H)$. There are two cases:\\
	$(H)-(a)$ $f$ is monotone and is convex for $u>0$, concave for $u<0$, then
	\begin{itemize}
		\item[(i)] $f(u)\geq 0$ for $u>0$, and $f(u)\leq 0$ for $u<0$.
		\item[(ii)] $uf(u)\geq 0$ and $F(u)\geq 0$ for $-\infty <u<+\infty$.
		\item[(iii)] $F(u)$ is increasing on $0<u<+\infty$ and decreasing on $-\infty < u <0$.
	\end{itemize}
	$(H)-(b)$ $f$ is convex for $-\infty < u < + \infty$, then
	\begin{itemize}
		\item[(iv)] $f(u)\geq 0$ for $-\infty < u < +\infty$.
		\item[(v)] $uf(u)\geq 0$ and $F(u)\geq 0$ for $u>0$; $uf(u)\leq 0$ and $F(u)\leq 0$ for $u<0$.
		\item[(vi)] $F(u)$ is increasing on $-\infty < u < +\infty$.
	\end{itemize}
\end{lem}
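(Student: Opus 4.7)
The lemma is an elementary consequence of the assumption $(H)$ combined with the hypothesis $f(0)=f'(0)=0$; no deep machinery is required. My plan is to treat the two cases separately and, within each case, derive the sign of $f$ first and then read off the sign and monotonicity of $F$ by integration.

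For case $(H)-(a)$, I would exploit monotonicity together with $f(0)=0$ to conclude (i) immediately: if $f$ is non-decreasing, then $u>0 \Rightarrow f(u)\geq f(0)=0$ and $u<0 \Rightarrow f(u)\leq f(0)=0$ (the non-increasing case is symmetric, but convexity on $u>0$ combined with $f'(0)=0$ actually forces $f$ to be non-decreasing on $[0,\infty)$, since a $C^1$ convex function has non-decreasing derivative, so $f'(u)\geq f'(0)=0$ for $u>0$; analogously for the concave branch). From (i), part (ii) is immediate: $uf(u)\geq 0$ because $u$ and $f(u)$ have the same sign, and the sign of $F(u)=\int_0^u f(s)\,ds$ follows by integrating a function with the correct sign on the interval of integration (with care for the orientation when $u<0$). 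Part (iii) is then just $F'(u)=f(u)$ combined with (i).

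For case $(H)-(b)$, convexity on the whole line together with $f(0)=f'(0)=0$ gives the supporting-line inequality $f(u)\geq f(0)+f'(0)\,u=0$ for every $u\in\mathbb{R}$, which is (iv). Part (v) then reduces to sign bookkeeping: for $u>0$ both factors in $uf(u)$ are nonnegative and $F(u)=\int_0^u f \geq 0$; for $u<0$ we have $u<0$ but $f(u)\geq 0$, so $uf(u)\leq 0$, and $F(u)=-\int_u^0 f(s)\,ds\leq 0$ because the integrand is nonnegative. Finally (vi) follows from $F'=f\geq 0$ on all of $\mathbb{R}$.

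There is no real obstacle here; the only point requiring a brief justification is the deduction of monotonicity of $f$ on each half-line in case $(H)-(a)$ from the given convexity/concavity and $f'(0)=0$, which I would handle by noting that $f'$ is non-decreasing on $\{u>0\}$ and non-increasing on $\{u<0\}$, so $f'\geq 0$ on $\{u>0\}$ and $f'\leq 0$ on $\{u<0\}$. Every other step is a one-line application of the fundamental theorem of calculus or of the supporting-line inequality for convex $C^1$ functions. I would therefore present the proof as a short, itemized verification mirroring the statement.
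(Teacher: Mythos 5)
The paper gives no proof of this lemma at all (it states ``Here we omit the proof''), so there is nothing of record to compare against; your elementary verification --- sign of $f$ from $f(0)=f'(0)=0$ plus the monotonicity of $f'$ on each half-line, the supporting-line inequality in case $(H)-(b)$, and the fundamental theorem of calculus for the statements about $F$ --- is exactly the natural argument. One sign in your closing paragraph is wrong, however: on $\{u<0\}$ concavity makes $f'$ non-increasing, and since $f'(0)=0$ this yields $f'(u)\geq f'(0)=0$ for $u<0$, \emph{not} $f'\leq 0$. Taken literally, $f'\leq 0$ on $(-\infty,0)$ together with $f(0)=0$ would give $f(u)\geq 0$ for $u<0$, contradicting your own item (i). The correct conclusion --- and the one you in fact use in the body of your argument --- is that $f'\geq 0$ on both half-lines, so $f$ is non-decreasing on all of $\mathbb{R}$, whence $f(u)\leq f(0)=0$ for $u<0$ and $f(u)\geq f(0)=0$ for $u>0$. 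With that single sign repaired, every step of your proof is sound and complete.
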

Here we omit the proof.

	Note that new condition \eqref{cont_1} on nonlinearity $f(u)$ includes the following cases: 
	\begin{align}
		(2+\epsilon) F(u) &\leq uf(u), \label{eq-0.1}\\
	 (2+\epsilon) F(u) &\leq uf(u) + \sigma\label{eq-0.2}, \\
	  (2+\epsilon) F(u) &\leq uf(u) + \beta u^2 + \sigma \label{eq-0.3},
	\end{align}
where $0<\beta \leq \epsilon\lambda_1 /2$, $\sigma>0$ and $F(u)=\int_0^u f(s)ds$.	
Condition \eqref{eq-0.1} is used by Philippin and Proytcheva \cite{PP-06} to prove the blow-up solutions for the semilinear heat equation. It is a special case of an abstract condition by Levine and Payne \cite{LP2-74}. Later, Bandle and Brunner \cite{BB-98} relaxed this condition with \eqref{eq-0.2}. Recently, a new condition  \eqref{eq-0.3} was introduced by Chung and Choi  \cite{Chung-Choi} for blow-up solution of the semilinear heat equation. Moreover, blow-up solutions of nonlinear porous medium equations and pseudo-parabolic equations with the new condition are studied in \cite{ST-21, RST-21}.

Note that condition \eqref{eq-0.3} extends \eqref{eq-0.1} and \eqref{eq-0.2}. The difference between \eqref{eq-0.3} and \eqref{eq-0.2} is that \eqref{eq-0.3} depends on the domain due to term $\beta u^2$. Constant $\beta$ depends on the domain through the first eigenvalue $\lambda_1$. If the first eigenvalue $\lambda_1$ is arbitrary small then \eqref{eq-0.3} gets closer to \eqref{eq-0.2}. 

Condition \eqref{eq-0.3} is equivalent to 
\begin{equation*}
	\frac{d}{du}\left( \frac{F(u)}{u^{2+\epsilon}} - \left(\frac{\sigma}{2+\epsilon}\right) \frac{1}{u^{2+\epsilon}} - \frac{\beta}{\epsilon} \frac{1}{u^{\epsilon}} \right) \geq 0, \,\,\, u>0. 
\end{equation*}
Then it is easy to see that 
\begin{align*}
	&\eqref{eq-0.1} \,\,\, \text{ holds if and only if } \,\, F(u)=u^{2+\epsilon}h_1(u),\\
	&\eqref{eq-0.2} \,\,\, \text{ holds if and only if } \,\, F(u)=u^{2+\epsilon}h_2(u)+b,\\
	&\eqref{eq-0.3} \,\,\, \text{ holds if and only if } \,\, F(u)=u^{2+\epsilon}h_3(u)+au^2+b,
\end{align*}
for some positive constants $\epsilon$, $b$, and $a<\lambda_1/2$, where $h_1$, $h_2$, and $h_3$ are nondecreasing functions on $(0,+\infty)$. Note that the constants  $\epsilon$, $b$, and $a$ shall be different in each case. 

\begin{lem}\label{lem}
	Let $f$ be a function satisfying $\eqref{eq-0.3}$ and $uf(u)\geq \lambda u^2$ for $u>0$ along with $\lambda > \lambda_1$. Then condition $\eqref{eq-0.3}$ implies that there exists $m>0$ such that $h_3(u)>0$ for $u>m$. In this case, it is possible to find $\mu >0$ such that $uf(u)\geq \mu u^{2+\epsilon}$ for $u\geq m$. It means that conditions \eqref{eq-0.2} and \eqref{eq-0.3} are equivalent. 
\end{lem}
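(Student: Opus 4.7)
The plan is to convert the hypothesis $uf(u)\geq\lambda u^2$ into a lower bound on $F(u)$, read off the positivity of $h_3$ from the representation $F(u)=u^{2+\epsilon}h_3(u)+au^2+b$, and then feed this back into \eqref{eq-0.3} to upgrade to a pointwise lower bound on $uf(u)$ of the correct order.

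First I would integrate $f(s)\geq\lambda s$ from $0$ to $u$ to obtain $F(u)\geq\frac{\lambda}{2}u^2$, and substitute this into the identity $F(u)=u^{2+\epsilon}h_3(u)+au^2+b$ to get $u^{2+\epsilon}h_3(u)\geq\bigl(\frac{\lambda}{2}-a\bigr)u^2-b$. The hypothesis $\lambda>\lambda_1$ together with $a<\lambda_1/2$ coming from condition \eqref{eq-0.3} guarantees $\frac{\lambda}{2}-a>0$; hence the right-hand side is strictly positive once $u$ exceeds the threshold $m_0:=\sqrt{2b/(\lambda-2a)}$, and consequently $h_3(u)>0$ for all $u>m_0$, which establishes the first claim.

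Next I would promote this strict positivity to a uniform positive lower bound on a half-line. Since $h_3$ is nondecreasing, for a fixed $m>m_0$ one has $h_3(u)\geq h_3(m)=:c>0$ for $u\geq m$. Rewriting \eqref{eq-0.3} as $uf(u)\geq(2+\epsilon)F(u)-\beta u^2-\sigma$ and using $F(u)\geq cu^{2+\epsilon}+au^2+b$ on $[m,\infty)$ yields an estimate of the shape $uf(u)\geq(2+\epsilon)c\,u^{2+\epsilon}+C_1u^2+C_2$, in which the $u^2$ and constant remainders are absorbed by the leading term once $m$ is enlarged sufficiently. This produces $uf(u)\geq\mu u^{2+\epsilon}$ on $[m,\infty)$ with $\mu$ a definite positive fraction of $(2+\epsilon)c$.

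For the announced equivalence of \eqref{eq-0.2} and \eqref{eq-0.3}, the implication \eqref{eq-0.2}$\Rightarrow$\eqref{eq-0.3} is trivial since \eqref{eq-0.2} is the case $\beta=0$; conversely, the bound $uf(u)\geq\mu u^{2+\epsilon}$ just obtained allows one to absorb the extra term $\beta u^2$ in \eqref{eq-0.3} into $uf(u)$ at the cost of slightly shrinking $\epsilon$ and enlarging $\sigma$, recovering the shape of \eqref{eq-0.2}. The main technical obstacle I anticipate is purely bookkeeping: a single threshold $m$ must simultaneously yield $h_3(m)>0$, dominate the $u^2$-remainder in the last estimate, and serve as the common value stated in the lemma. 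The crucial structural ingredient is the gap $\lambda>\lambda_1$: without it the coefficient $\frac{\lambda}{2}-a$ could fail to be positive and the argument would collapse at the very first step.
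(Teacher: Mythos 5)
Your proposal is correct and follows essentially the same route as the paper's proof: integrate $uf(u)\geq\lambda u^2$ to get $F(u)\geq\tfrac{\lambda}{2}u^2$, read off the eventual positivity of $h_3$ from the representation of $F$ (using $a<\lambda_1/2<\lambda/2$), use the monotonicity of $h_3$ to bound $F$ from below by a multiple of $u^{2+\epsilon}$ on $[m,\infty)$, feed this back into \eqref{eq-0.3} to obtain $uf(u)\geq\mu u^{2+\epsilon}$, and finally absorb the $\beta u^2$ term to recover \eqref{eq-0.2}. The only differences are bookkeeping choices in the absorption step (the paper uses $\beta u^2\leq\tfrac{\epsilon\lambda_1}{2}u^2\leq\tfrac{\epsilon}{2}uf(u)$ globally and a splitting $\epsilon=\epsilon_1+\epsilon_2$ with $\epsilon_1=\epsilon\lambda_1/\lambda$, whereas you enlarge $m$ and $\sigma$), which do not affect the substance of the argument.
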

\begin{proof}[Proof of Lemma \ref{lem}]
	Recall the facts $0<a<\lambda_1/2$ and $\lambda > \lambda_1$ that allows to obtain
	\begin{equation*}
		F(u) \geq \frac{\lambda}{2}u^2 > \frac{\lambda_1}{2}u^2,
	\end{equation*}
	then 
	\begin{equation*}
		u^{2+\epsilon}h_3(u) = F(u) -au^2 -b \geq \frac{\lambda -\lambda_1}{2} u^2 -b,
	\end{equation*}
	which goes to $+\infty$ as $u\rightarrow +\infty$. So there is $m>1$ such that $h_3(m)>0$, which implies that 
	\begin{equation*}
		F(u) \geq u^{2+\epsilon}h_3(u), \,\,\, u\geq m.
	\end{equation*}
	Applying it into condition \eqref{eq-0.3}, we get
	\begin{align*}
		u^{2+\epsilon} h_3(m) &\leq uf(u) + \beta u^2 + \sigma \\
		& \leq (1+ \epsilon/2)uf(u) + \sigma, \,\,\, u\geq m>1.
	\end{align*}
	That gives 
	\begin{equation*}
		uf(u) \geq \mu u^{2+\epsilon}, \,\, u\geq m>1,
	\end{equation*}
	for some constant $\mu>0$.
	
	Next we show that \eqref{eq-0.2} and \eqref{eq-0.3} are equivalent. Since $0<\beta \leq \epsilon \lambda_1/2$ and $uf(u)\geq \lambda u^2>\lambda_1 u^2$ for $u>0$, it follows from \eqref{eq-0.3} that
	\begin{equation*}
		\epsilon_1 F(u) + (2+\epsilon_2)F(u) \leq uf(u) + \frac{\epsilon \lambda_1}{2} u^2 + \sigma,
	\end{equation*}
	where $\epsilon_1= \frac{\epsilon \lambda_1}{\lambda}>0$ and $\epsilon_2 = \epsilon -\epsilon_1>0$. Then we have
	\begin{align*}
		uf(u) +\sigma &\geq (2+\epsilon_2)F(u) + \epsilon_1 \int_0^u [f(s) -\lambda s]ds\\
		&\geq (2+\epsilon_2)F(u),
	\end{align*}
	which is \eqref{eq-0.2}. This completes the proof.
\end{proof}

\subsection{Literature overview}
There are huge amount of papers devoted to problem \eqref{Wave-problem}, and it is impossible to cover everyone. We focus on a study of long-term behavior of evolution equations.  Early works of this direction are by Keller \cite{Keller-57}, Sattinger \cite{Sattinger-68}, J\"orgens \cite{Jorgens-70}, Tsutsumi \cite{Tsutsumi-1, Tsutsumi-2, Tsutsumi-3}, Levine \cite{Levine73} and others, where they investigated conditions on the initial data and source term $f$ for which solutions of \eqref{Wave-problem} blow up in a finite time. In 1975, Payne-Sattinger \cite{PS-75} introduced a potential wells method to study problem \eqref{Wave-problem}, and its extensions to parabolic and abstract operator equations. The potential wells method gives a better intuitive understanding of the phenomena of instability, because we could get this intuition by considering one-dimensional mechanical analogue of problem \eqref{Wave-problem} as $\ddot{x} = -x + f(x)$. It becomes one of the most powerful tool for studying nonlinear evolution equations, (see more in the book \cite{QS-book}).

In 2002, Vitillaro extended the potential wells theory for the wave equation with an internal nonlinear source and a nonlinear boundary damping as follows
\begin{align*}
\begin{cases}
u_{tt} - \Delta u  = |u|^{p-2}, \,\,\, & x \in \Omega,\,\, t>0, \\ 
u(x,0)  =u_0(x),\,\,\, u_t(x,0)  = u_1(x) &x \in \overline{\Omega}, \\
u=0\,\,\, & \text{  on }  \,\, \Gamma_0, t\geq 0,\\
\frac{\partial u}{\partial \nu } = \alpha(x) |u_t|^{m-2}u_t &  \text{  on }  \,\, \Gamma_1, t\geq 0,
\end{cases}
\end{align*}
where $\Omega \subset \mathbb{R}^n$ is a regular bounded domain with smooth boundary given as the disjoint union $\partial \Omega= \Gamma_0 \cup \Gamma_1$. 

In 2006, Liu-Zhao \cite{LZh-06} generalized and improved the results of Payne-Sattinger \cite{PS-75}, by obtaining the invariant sets and vacuum isolation of solutions, and a threshold result of global existence and nonexistence of solutions with condition $(H)-(a)$ on nonlinearity $f(u)$, (see also \cite{LZh-03} and \cite{LZh-04}). Later, Runzhang Xu \cite{Xu-10} complemented the result of Liu-Zhao \cite{LZh-06} by considering the critical initial energy $E(0)=d$. We refer other works of Runzhang Xu and his collaborators in this direction \cite{XS-13}, \cite{XZh-17}, and \cite{XLN-20}.  

In 2020, Chen-Chen proved the unsolved case $(H)-(b)$ which fills important gaps regarding problem \eqref{Wave-problem}. By introducing a family of potential wells, the invariant sets and vacuum isolation of solutions were obtained. Moreover, the global nonexistence of solution and asymptotic behavior of solution  were proved for semilinear parabolic equations.

In addition to the references given in detail above, the literature is rich with results on wave equations along with a damped term. Many pioneering papers such as Lions and Strauss \cite{LS-65}, Nakao and  Nanbu \cite{NN-75}, as well as works by Glassey \cite{Glassey} and Ball \cite{Ball-77} are worthy of mention. Moreover,  we would like to mention \cite{CCM-04, CZT-20, DSS-20, Esuivel-03, GS-06, Ikehata-96, LX-20, MRV-18, NO-93} and references therein.

\subsection{Main goal of the paper} A main aim of this work is to study a global existence and blow-up of solutions to initial-boundary problem \eqref{Wave-problem} with a new condition on a nonlinearity $f(u)$ as follows
\begin{equation*}
\alpha\int_0^u f(s)ds \leq uf(u) + \beta u^2 +\alpha \sigma,
\end{equation*}
for some positive constants $\alpha$, $\beta$, and $\sigma$, where $\beta < \frac{\lambda_1(\alpha -2)}{2}$ with $\lambda_1$ being the first eigenvalue of Laplacian. The new condition was introduced by Chung-Choi \cite{Chung-Choi} for a heat equation. By introducing a family of potential wells, we are able to establish the invariant sets, vacuum isolation of solutions, and a threshold results of global existence and nonexistence of solutions with the new condition on a source term. Moreover, critical initial conditions $E(0)=d$ are discussed for global existence and blow-up solutions. 
\subsection{Plan of the paper}
In Section \ref{Sec2}, we introduce the potential wells theory along with its properties. In Section \ref{Sec3}, we use the family of potential wells to obtain invariant sets and vacuum isolating solutions of problem \eqref{Wave-problem}. In Section \ref{Sec4} and \ref{Sec5}, we prove the global existence and nonexistence of solutions for problem \eqref{Wave-problem} with initial conditions $E(0)<d$ and $E(0)=d$. In Section \ref{Sec6}, we provide a brief conclusion of results.

\section{Potential Wells Theory}\label{Sec2}
In this section, we introduce a family of potential wells. First, we would like to illustrate idea of potential wells by giving a simple example such as one-dimensional mechanical analogue of problem \eqref{Wave-problem} 
\begin{equation}\label{eq-1D}
\ddot{x} = -x + f(x),
\end{equation}
where $x$ is a real number. This example was given by Payne-Sattinger \cite{PS-75} and it describes a mechanical system with one degree of freedom, while problem  \eqref{Wave-problem} may be considered as a system with an infinite number of degree of freedom.
The potential energy of equation \eqref{eq-1D} is defined by 
\begin{equation*}
	V(x) = \frac{x^2}{2} - F(x),
\end{equation*}
where $F(x) = \int_0^x f(s)ds$. Imagine that potential energy $V(x)$ has the qualitative shape with a local minimum at $x=0$ and a local maximum at $x = x_*$.

The set 
\begin{equation*}
	W = \{ x: V(x)<d, \,\, x < x_* \}
\end{equation*} 
describes a potential well. The total energy of equation \eqref{eq-1D} is 
\begin{equation*}
	E(t) = \frac{\dot{x}^2}{2} + \frac{x^2}{2} - F(x)= \frac{\dot{x}^2}{2} + V(x).
\end{equation*}
As we know that the total energy of \eqref{eq-1D} is conserved under the motion. Then by the conservation of energy we have 
\begin{itemize}
	\item If $E(0)<d$ and $x(0)\in W$, then $x(t)\in W$ for all $t>0$.
	\item If $E(0)<d $ and $x(0)>x_*$, then $x(t) > x_* $ for all $t>0$.
\end{itemize} 
In order to cross into potential well $W$, the total energy would have to be greater than the depth of potential well $d$.

This idea can be extended to the infinite dimensional case \eqref{Wave-problem}. The \textbf{potential energy} associated with \eqref{Wave-problem} is the functional 
\begin{align*}
	J(u) & = \frac{1}{2} || \nabla u ||^2 - \int_{\Omega} [F(u)-\sigma] dx.
\end{align*}
We define the \textbf{Nehari functional} by
\begin{equation*}
	I(u)  = || \nabla u ||^2 - \int_{\Omega } uf(u)dx, \,\,\, u \in H_0^1(\Omega),
\end{equation*}
where $I(u)$ can be considered as the extension of point $x_*$ in one dimension analogue. 

The depth of potential wells is defined by
\begin{equation*}
		d := \inf \{J(u): u \in H_0^1(\Omega)\backslash \{ 0 \}, I(u)=0\}.
\end{equation*}
Also the quantity $d$ can be understood as a mountain pass energy. For instance, there is the valley. If we pour water into the valley containing a local minimum at the origin, as the water level rises in the direction of the potential energy and passes at a local maximum $d$. 

Note that in all proofs, we should treat slightly differently the cases $(a)$ and $(b)$ for assumption $(H)$ for nonlinearity $f(u)$.  
\begin{lem}\label{lem-1}
	Let $f(u)$ satisfy $(H)$, then we have 
	\begin{itemize}
		\item[(a)] $uf'(u) - f(u) \geq 0$ holds in the case $(H)-(a)$ with equality holding only for $u=0$, and in the case $(H)-(b)$ for $u \geq 0$.
		\item[(b)] $|F(u)-\sigma|\leq A |u|^{\gamma}$ for some $A>0$ and all $u\in R$.
		\item[(c)] $F(u)-\sigma \geq B |u|^{\lambda}$ for $|u|\geq 1$ with $B = \frac{F(\widetilde{u})}{\widetilde{u}^{\lambda}}>0$ and $\lambda = \frac{\alpha}{1+\beta}>2$.
	\end{itemize}
\end{lem}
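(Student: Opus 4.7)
For (a), I would introduce the auxiliary function $g(u):=uf'(u)-f(u)$ and compute $g'(u)=uf''(u)$. In case $(H)$-(a), $f''\ge 0$ on $(0,\infty)$ and $f''\le 0$ on $(-\infty,0)$, so the product $uf''(u)$ is non-negative on each half-line; in case $(H)$-(b), $f''\ge 0$ throughout, so $uf''(u)\ge 0$ on $[0,\infty)$. Combined with $g(0)=0$, which follows from $f(0)=0$, integrating $g'\ge 0$ against the sign of $u$ gives the claimed non-negativity of $g$ on the stated range. Strict positivity at $u\neq 0$ in case (a) follows from the combination $f(0)=f'(0)=0$, which prevents $f$ from being affine on either half-line unless $f\equiv 0$.

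For (b), I would convert the pointwise inequality $|uf(u)|\le\gamma|F(u)-\sigma|$ into a scalar ODE inequality. Writing $\Phi:=F-\sigma$ so that $\Phi'=f$, the hypothesis reads $|u\Phi'(u)|\le\gamma|\Phi(u)|$. On any interval where $\Phi$ has a fixed sign this becomes $|(\ln|\Phi|)'(u)|\le\gamma/|u|$, whose integration yields $|\Phi(u)|\le C|u|^\gamma$. I would then glue the argument across the isolated zeros of $\Phi$ (which lie in a bounded set, since $F(u)\to+\infty$ as $|u|\to\infty$, to be confirmed in (c)) and patch with the uniform bound on $|\Phi|$ supplied by continuity of $F$ on compacta; in the downstream use it suffices to have the bound on $|u|\ge 1$, which sidesteps the obstruction at $u=0$ arising from $\Phi(0)=-\sigma$.

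For (c), the dual monotonicity argument applies to $\psi(u):=(F(u)-\sigma)/u^\lambda$ on $u>0$, for which
\begin{equation*}
u^{\lambda+1}\psi'(u)=uf(u)-\lambda(F(u)-\sigma).
\end{equation*}
Rearranging \eqref{cont_1} as $uf(u)\ge\alpha(F(u)-\sigma)-\beta u^2$ gives $u^{\lambda+1}\psi'(u)\ge(\alpha-\lambda)(F(u)-\sigma)-\beta u^2$. With $\lambda$ chosen as in the statement so that $\alpha>\lambda>2$ (which is where $\beta<\lambda_1(\alpha-2)/2$ is used), and with a preliminary lower bound $F(u)-\sigma\ge c u^2$ valid for $u$ large, the right-hand side becomes non-negative beyond a threshold $\tilde u\ge 1$. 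Monotonicity of $\psi$ on $[\tilde u,\infty)$ then yields $\psi(u)\ge\psi(\tilde u)=:B$, i.e.\ $F(u)-\sigma\ge B u^\lambda$ for $u\ge\tilde u$; one passes to $|u|\ge 1$ by shrinking $B$, and in case $(H)$-(a) the analogous argument runs on the negative half-line. The main obstacle I foresee is producing the preliminary quadratic lower bound $F(u)-\sigma\ge cu^2$ that fuels the monotonicity step; I would extract it from \eqref{cont_1} itself, exploiting $\sigma>0$ together with the explicit decomposition $F(u)=u^{\alpha}h_3(u)+au^2+b$ recorded just above Lemma \ref{lem}, after which the rest of (c) is a routine ODE integration.
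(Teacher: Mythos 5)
Your parts (b) and (c) follow essentially the paper's own route: both convert the pointwise hypotheses into the logarithmic differential inequalities $|(\ln|F-\sigma|)'|\le\gamma/|u|$ and $|f(u)|/(F(u)-\sigma)\ge\lambda/|u|$ and integrate from a reference point $\widetilde u$; your monotone function $\psi(u)=(F(u)-\sigma)/u^{\lambda}$ is just the differentiated form of the paper's integration. You are in fact more careful than the source about the zeros of $F-\sigma$ and about the failure of (b) at $u=0$ (where $|F(0)-\sigma|=\sigma>0$ cannot be bounded by $A\cdot 0$). The gap you flag in (c) --- needing a preliminary bound $F(u)-\sigma\ge cu^2$ to absorb the $\beta u^2$ term --- is the exact counterpart of the paper's unexplained step $\beta|u|/(F(u)-\sigma)\le\beta|f(u)|/(F(u)-\sigma)$, which silently assumes $|f(u)|\ge|u|$ for $|u|\ge1$; neither write-up closes this, so you are no worse off than the source, but be aware the paper does not supply the missing estimate either.

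Part (a) is where you genuinely diverge. The paper handles case $(H)$-(a) by a second-variation computation of $J$ at a critical point $w$, concluding only the integrated/product identity $w\bigl(wf'(w)-f(w)\bigr)\ge0$, whereas you argue pointwise from $g'(u)=uf''(u)\ge0$ (or, avoiding $C^2$, from supporting lines). Your route is more elementary and actually pointwise, which is what Lemma \ref{lem-2} needs. But watch the sign: on $u<0$ in case $(H)$-(a) your own computation gives $g$ nondecreasing, hence $g(u)\le g(0)=0$, i.e.\ $uf'(u)-f(u)\le0$ there --- and indeed for $f(u)=|u|^{p-1}u$ one has $uf'(u)-f(u)=(p-1)|u|^{p-1}u<0$ when $u<0$. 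So the inequality as literally stated in the lemma fails on the negative half-line; what is true, what your argument and the paper's both actually deliver, and what is used downstream, is the product form $u\bigl(uf'(u)-f(u)\bigr)\ge0$. State that rather than ``non-negativity of $g$ on each half-line.'' Your claim of strict positivity for $u\ne0$ is also not airtight ($f$ may vanish identically on an interval $[0,\delta]$ while remaining monotone and convex with $f(0)=f'(0)=0$), though the paper does not prove the equality case either and nothing later depends on it.
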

\begin{proof}[Proof of Lemma \ref{lem-1}]
$(a)$ If $f(u)$ satisfies $(H)-(a)$, then the proof is based on the second variation of functional $J$ at a critical point $w$. Let 
\begin{equation*}
	i(\tau):= J(w+\tau v) = \frac{1}{2} \int_{\Omega } |\nabla (w+ \tau v)|^2 dx - \int_{\Omega }F(w+\tau v)dx, 
\end{equation*} 
where $v \in C_0^{\infty}(\Omega)$. The second variation of $J$ is
\begin{equation*}
	i''(0) = \int_{\Omega } |\nabla v|^2 dx - \int_{\Omega } f'(w)v^2 dx.
\end{equation*}
If $w$ is a local minimum of functional $J$, then $i''(0)\geq 0$. Then $w=0$ gives $i''(0) = \int_{\Omega } |\nabla v|^2dx\geq 0$. On other hand, $w$ is a non-trivial critical point, then $i''(0)\leq 0$. Let $w$ be an admissible test function, then we compute
\begin{align*}
i''(0) &= \int_{\Omega } |\nabla w|^2 dx - \int_{\Omega } f'(w)w^2 dx\\
	& = - \int_{\Omega } w(\Delta w + f'(w)w)dx \\
	& = -\int_{\Omega } w( f'(w)w - f(w))dx,
\end{align*}
it follows that $ w( f'(w)w - f(w)) \geq 0$. 

In the case $(H)-(b)$ $, $ $f$ is convex for $-\infty < u < + \infty$, this gives $f(0)\geq f(u)+f'(u)(0-u)$ that gives $uf'(u)\geq f(u)$.
 
 $(b)$ From $(H)$, the growth condition $|uf(u)|\leq \gamma |F(u)-\sigma|$ implies
 \begin{align*}
 	\frac{|f(u)|}{|F(u)-\sigma|} \leq \frac{\gamma}{|u|} \,\,\, \text{ for } \,\,\, u \neq 0.
 \end{align*}
 Taking any known fixed $\widetilde{u} \neq 0$, we integrate above inequality from $\widetilde{u}$ to $u$, which gives
 \begin{align*}
 	\int_{\widetilde{u}}^u \frac{|f(s)|}{|F(s)-\sigma|} ds &\leq 	\int_{\widetilde{u}}^u \frac{\gamma}{|s|}ds,
 \end{align*} 
 and by using $F'(u)=f(u)$, then we have
 \begin{align*}
 	\ln \frac{|F(u)-\sigma|}{|F(\widetilde{u})-\sigma|}\leq \ln \frac{|u|^{\gamma}}{|\widetilde{u}|^{\gamma}},
 \end{align*}
 that is $|F(u)-\sigma| \leq A |u|^{\gamma}$ where $A = \frac{|F(\widetilde{u})-\sigma|}{|\widetilde{u}|^{\gamma}}$.
 
 $(c)$ From $(H)$ we have $\alpha F(u) \leq uf(u) + \beta u^2 +\alpha  \sigma$. For $|u|\geq 1$, we write
 \begin{align*}
 	\frac{\alpha}{|u|}  \leq  \frac{|f(u)|}{F(u)-\sigma} +  \frac{\beta |u|}{F(u)-\sigma}  \leq (1+\beta)  \frac{|f(u)|}{F(u)-\sigma},
 \end{align*}
 then we get
 \begin{equation}
 	\frac{|f(u)|}{F(u)-\sigma} \geq \frac{\lambda}{|u|} ,
 \end{equation}
 where $\lambda= \frac{\alpha}{1+\beta}$. As the previous approach on $(b)$, we get $F(u)-\sigma \geq B |u|^{\lambda} $ where $B = \frac{F(\widetilde{u})}{\widetilde{u}^{\lambda}}$. 
\end{proof}

\begin{rem}
	Note that from Lemma \ref{lem-1} and condition $(H)$ we have the following properties:
	\begin{itemize}
		\item $|f(u)|\leq \gamma A|u|^{\gamma-1}$ for all $u \in R$.
		\item $uf(u) + \beta u^2 \geq \alpha B|u|^{\lambda}$ for $u\geq 1$.
	\end{itemize}
\end{rem}

\begin{lem}\label{lem-2}
	Let $f(u)$ satisfy $(H)$. Let $u\in H_0^1(\Omega)$, $||\nabla u ||\neq 0$, and 
		\begin{align}
		\phi(\epsilon) = \frac{1}{\epsilon} \int_{\Omega } u f(\epsilon u) dx.
	\end{align}
	Then we have the following properties of function $\phi$: 
	\begin{itemize}
		\item $\phi(\epsilon)$ is increasing on $0< \epsilon < \infty$.
		\item $\lim_{\epsilon \rightarrow 0}\phi(\epsilon)=0$ and $\lim_{\epsilon \rightarrow +\infty}\phi(\epsilon)=+\infty$.
	\end{itemize}
In the case $(H)-(b)$, there is additional assumption $u(x)> 0$.
\end{lem}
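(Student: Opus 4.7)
The plan is to establish monotonicity by differentiating $\phi$ and then handle the two limits separately, using the growth controls on $f$ packaged in Lemma~\ref{lem-1} and the remark that follows it.

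For the monotonicity, I would compute
\begin{equation*}
\epsilon^{2}\phi'(\epsilon)=\int_{\Omega}\bigl[\epsilon u^{2} f'(\epsilon u)-u f(\epsilon u)\bigr]dx=\int_{\Omega}\frac{1}{\epsilon}\,(\epsilon u)\bigl[(\epsilon u)f'(\epsilon u)-f(\epsilon u)\bigr]dx,
\end{equation*}
so that after the substitution $v=\epsilon u$ the integrand becomes $\epsilon^{-1}v\bigl[vf'(v)-f(v)\bigr]$. In case $(H)$–$(a)$, this quantity is nonnegative for every $v\in\mathbb{R}$ by the convexity/concavity argument recorded in Lemma~\ref{lem-1}(a) (the proof there really yields $v(vf'(v)-f(v))\geq 0$). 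In case $(H)$–$(b)$ the assumption $u(x)>0$ forces $v=\epsilon u>0$, and Lemma~\ref{lem-1}(a) again gives $vf'(v)\geq f(v)$, hence the integrand is nonnegative. Since $\|\nabla u\|\neq 0$ means $u\not\equiv 0$, the inequality is strict on a set of positive measure, so $\phi'(\epsilon)>0$ and $\phi$ is strictly increasing.

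For the limit as $\epsilon\to 0^{+}$, I would invoke the growth bound $|f(u)|\leq \gamma A|u|^{\gamma-1}$ stated in the remark after Lemma~\ref{lem-1}. It gives
\begin{equation*}
|\phi(\epsilon)|\leq \frac{1}{\epsilon}\int_{\Omega}|u|\cdot \gamma A|\epsilon u|^{\gamma-1}dx=\gamma A\,\epsilon^{\gamma-2}\|u\|_{L^{\gamma}}^{\gamma},
\end{equation*}
and since $u\in H_{0}^{1}(\Omega)\hookrightarrow L^{\gamma}(\Omega)$ by the Sobolev embedding (using $2<\gamma<2n/(n-2)$) and $\gamma>2$, the right-hand side vanishes as $\epsilon\to 0^{+}$.

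For the limit as $\epsilon\to+\infty$ I would use the second bullet of the same remark, namely $uf(u)+\beta u^{2}\geq\alpha B|u|^{\lambda}$ whenever $|u|\geq 1$, with $\lambda>2$. Splitting $\Omega$ into $\Omega_{\epsilon}:=\{|u|\geq 1/\epsilon\}$ and its complement, and using that $uf(\epsilon u)\geq 0$ pointwise (from Lemma of properties of $f$, which in case $(H)$–$(b)$ requires the hypothesis $u>0$), I discard the integral over $\Omega\setminus\Omega_{\epsilon}$ and obtain
\begin{equation*}
\phi(\epsilon)\geq\frac{1}{\epsilon}\int_{\Omega_{\epsilon}}uf(\epsilon u)\,dx\geq \alpha B\,\epsilon^{\lambda-2}\int_{\Omega_{\epsilon}}|u|^{\lambda}dx-\beta\|u\|^{2}.
\end{equation*}
Since $\Omega_{\epsilon}\uparrow\{u\neq 0\}$ as $\epsilon\to\infty$, monotone convergence gives $\int_{\Omega_{\epsilon}}|u|^{\lambda}\to\|u\|_{L^{\lambda}}^{\lambda}>0$, and $\epsilon^{\lambda-2}\to+\infty$, so $\phi(\epsilon)\to+\infty$. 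The main subtle point, and the place I expect any issue, is the monotonicity step: one has to be careful that the convexity-based inequality from Lemma~\ref{lem-1}(a) is applied in the form $v(vf'(v)-f(v))\geq 0$ rather than the (incorrect for $v<0$) version $vf'(v)-f(v)\geq 0$, which is why in case $(H)$–$(b)$ the sign condition $u>0$ is imposed as an extra hypothesis.
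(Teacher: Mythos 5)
Your proposal is correct and follows essentially the same route as the paper: differentiating $\phi$ and invoking the inequality $v\bigl(vf'(v)-f(v)\bigr)\geq 0$ from Lemma \ref{lem-1}(a) for monotonicity, the growth bound $|uf(u)|\leq \gamma A|u|^{\gamma}$ for the limit at $0$, and the restriction to $\Omega_{\epsilon}=\{|u|\geq 1/\epsilon\}$ together with the lower bound $uf(u)+\beta u^{2}\geq \alpha B|u|^{\lambda}$ for the limit at $+\infty$. Your explicit remark that the correct form of the convexity inequality is $v(vf'(v)-f(v))\geq 0$ rather than $vf'(v)-f(v)\geq 0$ is a welcome clarification of a point the paper glosses over.
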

\begin{proof}[Proof of Lemma \ref{lem-2}]
	Let 
	\begin{align*}
		\frac{d\phi(\epsilon)}{d\epsilon} &= \frac{1}{\epsilon^2} \int_{\Omega } [\epsilon u^2 f'(\epsilon u) - uf(\epsilon u)]  dx \\
		&= \frac{1}{\epsilon^3} \int_{\Omega } \epsilon u[\epsilon u f'(\epsilon u) - f(\epsilon u)]  dx >0.
	\end{align*}
	By $(a)$ in Lemma \ref{lem-1}, we see that $\phi(\epsilon)$ is increasing on $0<\epsilon < \infty$.
	
	 The property $\lim_{\epsilon \rightarrow 0}\phi(\epsilon)=0$ comes from the fact that
	 \begin{align*}
	 	|\phi (\epsilon)| \leq \frac{1}{\epsilon^2} \int_{\Omega } |\epsilon u f(\epsilon u)|dx \leq \frac{\gamma A}{\epsilon^2} \int_{\Omega } |\epsilon u|^{\gamma}dx = \gamma \epsilon^{\gamma -2} A || u ||^{\gamma}_{\gamma}.
	 \end{align*}
	 
	  The property $\lim_{\epsilon \rightarrow +\infty}\phi(\epsilon)=+\infty$ can be shown by using the fact $uf(u)\geq 0$ and  $(c)$ in Lemma \ref{lem-1} as follows
	 \begin{align*}
	 	\phi(\epsilon) &= \frac{1}{\epsilon^2}\int_{\Omega } \epsilon u f(\epsilon u) dx \geq \frac{1}{\epsilon^2}\int_{\Omega_{\epsilon} } \epsilon u f(\epsilon u) dx\\
	 	& \geq \frac{1}{\epsilon^2}\int_{\Omega_{\epsilon} } [\alpha F(\epsilon u) - \beta \epsilon^2 u^2 -  \alpha \sigma  ]dx \\
	 	& \geq \alpha B \epsilon^{\lambda -2} \int_{\Omega_{\epsilon} } |u|^{\lambda} dx - \beta \int_{\Omega_{\epsilon} } u^2 dx,
	 \end{align*}
	 where $\Omega_{\epsilon} = \{ x \in \Omega \,\, | \,\, u(x)\geq \frac{1}{\epsilon} \}$ and
	 \begin{equation*}
	 	\lim_{\epsilon \rightarrow + \infty} \int_{\Omega_{\epsilon} } |u|^{\lambda} dx = || u ||^{\lambda}_{\lambda} >0.
	 \end{equation*}
	 This completes the proof.
\end{proof}
When nonlinearity term $f(u)$ satisfies condition $(H)$, then all non-trivial critical points are a priori unstable equilibria for semilinear wave equation \eqref{Wave-problem}. The next lemma will provide the local minimum of $J$, a critical point $\epsilon^*$ and positiveness of a depth of potential well. 
\begin{lem}\label{lem-3} 	Let $f(u)$ satisfy $(H)$, $u\in H_0^1(\Omega)$, and $||\nabla u ||\neq 0$. Then 
	\begin{itemize}
		\item[(i)] $ \lim_{\epsilon \rightarrow 0}J(\epsilon u)=0$.
		\item[(ii)] $ \frac{d}{d\epsilon} J(\epsilon u) = \frac{1}{\epsilon} I(\epsilon u)$.\\
		In addition, we assume $u(x)>0$ in the case $(H)-(b)$.
		\item[(iii)] $ \lim_{\epsilon \rightarrow + \infty}J(\epsilon u)= -\infty$.
		\item[(iv)] There exists a unique $\epsilon^*=\epsilon^*(u)>0$ such that
		\begin{equation}
			\frac{d}{d\epsilon} J(\epsilon u)|_{\epsilon=\epsilon^*} = 0.
		\end{equation}
		\item[(v)] $J(\epsilon u)$ is increasing on $0 \leq \epsilon \leq \epsilon^*$, decreasing on $\epsilon^* \leq \epsilon < \infty$ and takes the maximum at $\epsilon = \epsilon^*$.
		\item[(vi)] $I(\epsilon u)>0$ for $0<\epsilon < \epsilon^*$, $I(\epsilon u)<0$ for $\epsilon^*<\epsilon < \infty$  and $I(\epsilon^*u)=0$.
	\end{itemize}
\end{lem}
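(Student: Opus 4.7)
The plan is to prove parts (i)--(vi) in order, with parts (iv)--(vi) all resting on the strict monotonicity of $\phi$ from Lemma~\ref{lem-2}. Throughout I would expand
\[
J(\epsilon u)=\tfrac{\epsilon^2}{2}\|\nabla u\|^2-\int_{\Omega}[F(\epsilon u)-\sigma]\,dx.
\]
For (i), the kinetic term $\tfrac{\epsilon^2}{2}\|\nabla u\|^2$ manifestly vanishes as $\epsilon\to 0$, and I would control the potential term via the growth bound $|F(w)-\sigma|\le A|w|^\gamma$ from Lemma~\ref{lem-1}(b) combined with the Sobolev embedding $H_0^1(\Omega)\hookrightarrow L^\gamma(\Omega)$ (available since $\gamma<2n/(n-2)$); this yields $\int_\Omega[F(\epsilon u)-\sigma]\,dx=O(\epsilon^\gamma\|u\|_\gamma^\gamma)\to 0$. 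For (ii), a direct differentiation under the integral gives $\frac{d}{d\epsilon}J(\epsilon u)=\epsilon\|\nabla u\|^2-\int_\Omega u\,f(\epsilon u)\,dx$, which matches $\frac{1}{\epsilon}I(\epsilon u)$ after expanding $I(\epsilon u)=\epsilon^2\|\nabla u\|^2-\epsilon\int_\Omega u\,f(\epsilon u)\,dx$.

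For (iii), I would use the lower bound $F(w)-\sigma\ge B|w|^\lambda$ valid for $|w|\ge 1$ with $\lambda=\tfrac{\alpha}{1+\beta}>2$, from Lemma~\ref{lem-1}(c). Restricting integration to $\Omega_\epsilon=\{x\in\Omega:|u(x)|\ge 1/\epsilon\}$ (in case $(H)$-(a); in case $(H)$-(b) use the hypothesis $u(x)>0$ and take $\Omega_\epsilon=\{u(x)\ge 1/\epsilon\}$ so that $F(\epsilon u)$ genuinely grows) produces a contribution at least $B\epsilon^\lambda\int_{\Omega_\epsilon}|u|^\lambda\,dx$. Since $u\not\equiv 0$, the restricted integral converges to $\|u\|_\lambda^\lambda>0$ as $\epsilon\to\infty$, and because $\lambda>2$ the potential term eventually dominates $\tfrac{\epsilon^2}{2}\|\nabla u\|^2$, forcing $J(\epsilon u)\to -\infty$.

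Parts (iv)--(vi) then follow cleanly. Rewriting (ii) as
\[
\frac{d}{d\epsilon}J(\epsilon u)=\epsilon\bigl[\|\nabla u\|^2-\phi(\epsilon)\bigr],
\]
Lemma~\ref{lem-2} tells us $\phi$ is strictly increasing from $0$ to $+\infty$, so there is a unique $\epsilon^*=\epsilon^*(u)>0$ with $\phi(\epsilon^*)=\|\nabla u\|^2$; this is the unique stationary point of $\epsilon\mapsto J(\epsilon u)$. The sign of $\|\nabla u\|^2-\phi(\epsilon)$ then yields the monotonicity statement of (v) (increasing before $\epsilon^*$, decreasing after), and (vi) is immediate from (ii) since $\operatorname{sgn}I(\epsilon u)=\operatorname{sgn}\frac{d}{d\epsilon}J(\epsilon u)$ for $\epsilon>0$. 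The main obstacle is part (iii): one has to handle the two regimes of $(H)$ separately and track why the positivity assumption $u(x)>0$ is indispensable in case $(H)$-(b) (since $F(w)\le 0$ for $w<0$ there, one cannot get super-quadratic growth from the negative part of $u$), and one must verify that $\int_{\Omega_\epsilon}|u|^\lambda\,dx$ is bounded below uniformly in large $\epsilon$.
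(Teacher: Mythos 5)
Your proposal is correct and follows essentially the same route as the paper: (i)--(iii) via the growth bounds of Lemma \ref{lem-1} on the expansion of $J(\epsilon u)$, and (iv)--(vi) via the identity $\frac{d}{d\epsilon}J(\epsilon u)=\epsilon(\|\nabla u\|^2-\phi(\epsilon))$ together with Lemma \ref{lem-2}. The only cosmetic difference is in (iv), where you obtain existence and uniqueness of $\epsilon^*$ in one stroke from $\phi$ being a strictly increasing bijection of $(0,\infty)$ onto itself, whereas the paper gets existence from (i) and (iii) and then rules out two critical points by a short contradiction argument that again reduces to the monotonicity of $\phi$.
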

\begin{proof}[Proof of Lemma \ref{lem-3}]
	The proof of (i) follows from $|F(u)-\sigma|\leq A |u|^{\gamma}$ and  
	\begin{equation*}
		J(\epsilon u) = \frac{\epsilon^2}{2} || \nabla u||^2 - \int_{\Omega } [F(\epsilon u) -\sigma ]dx.
	\end{equation*}
	The proof of (ii) follows from
	\begin{equation*}
		 \frac{d}{d\epsilon} J(\epsilon u) = \epsilon || \nabla u||^2 - \int_{\Omega } uf(\epsilon u)dx = \frac{1}{\epsilon}I(\epsilon u).
	\end{equation*}
 First, we consider the case (iii) for $(H)-(a)$. Let 
 \begin{align*}
 		J(\epsilon u) &= \frac{\epsilon^2}{2} || \nabla u||^2 - \int_{\Omega } [F(\epsilon u) -\sigma] dx\\
 		&\leq  \frac{\epsilon^2}{2} || \nabla u||^2 - B \epsilon^{\lambda} \int_{\Omega_{\epsilon} } |u|^{\lambda}dx.
 \end{align*}
 Since $\lambda >2$, $J(\epsilon u)$ goes to $-\infty$ for $\epsilon \rightarrow +\infty$. This guaranties the existence of a critical point $\epsilon^*$. The case $(H)-(b)$ can be proved in the same way with additional assumption $u>0$.
 
 The case (iv), the uniqueness of critical point, can be proved by supposing that there two roots of equation $\frac{dJ(\epsilon u)}{d\epsilon}=0$ as $\epsilon_1<\epsilon_2$. Then we have
 \begin{equation*}
 	 \epsilon_1 || \nabla u||^2 - \int_{\Omega } uf(\epsilon_1 u)dx =0,
 \end{equation*}
 and 
  \begin{equation*}
 \epsilon_2 || \nabla u||^2 - \int_{\Omega } uf(\epsilon_2 u)dx =0.
 \end{equation*}
 Elimination of $|| \nabla u||^2 $ from above equations gives
 \begin{equation*}
 	\int_{\Omega } u \left[\frac{f(\epsilon_2 u)}{\epsilon_2}- \frac{f(\epsilon_1 u)}{\epsilon_1} \right]dx = 0.
 \end{equation*}
 This expression can be rewritten by the following substitution $w=\epsilon_1 u$ and $\epsilon = \epsilon_2/\epsilon_1>1$ as
 \begin{equation}\label{eq-2.4}
 	\phi(\epsilon)-\int_{\Omega } wf(w)=0.
 \end{equation}
 It is easy to see from Lemma \ref{lem-2} that equation \eqref{eq-2.4} does not hold.
 
 The case (v) and (vi) follow from Lemma \ref{lem-2} and 
 \begin{equation*}
 	\frac{d}{d\epsilon} J(\epsilon u) = \frac{1}{\epsilon} I(\epsilon u)= \epsilon ( || \nabla u||^2 - \phi(\epsilon)).
 \end{equation*}
 The proof is completed.
\end{proof}
\begin{rem} In this remark, we would like to show how the notation of $I(u)$ is adopted.
	
	Note that for a unique $\epsilon^*=\epsilon^*(u)>0$ we have
	\begin{equation*}
	\frac{d}{d\epsilon} J(\epsilon u)|_{\epsilon=\epsilon^*} = 0,
	\end{equation*}
and $J(\epsilon^*u)$ can be considered as the highest level attained when leaving the {\em potential well} $W$ along a ray in the direction $u$. Now the depth $d$ of potential well is defined by
\begin{equation*}
	d = \inf_{u\neq 0} J(\epsilon^*u).
\end{equation*}
Then $u$ can be normalized so that $\epsilon^*=1$, which gives
\begin{equation*}
	I(u)\equiv \frac{d}{d\epsilon^*} J(\epsilon^* u)|_{\epsilon^*=1} = || \nabla u||^2 - \int_{\Omega } uf(u)dx.
\end{equation*}
Then we have 
\begin{equation*}
d := \inf \{J(u): u \in H_0^1(\Omega)\backslash \{ 0 \}, I(u)=0\}.
\end{equation*} 
\end{rem}

In order to introduce the family of potential wells $\{W_{\delta}\}$ and  $\{V_{\delta}\}$, we define
 \begin{align*}
	I_{\delta}(u) = \delta || \nabla u||^2 - \int_{\Omega } uf(u)dx, \,\,\, \delta >0,
\end{align*}
and 
\begin{equation*}
d(\delta) := \inf \{J(u): u \in H_0^1(\Omega)\backslash \{ 0 \}, I_{\delta}(u)=0\}.
\end{equation*}

The following lemmas will describe how space $H_0^1(\Omega)$ is divided into two parts and the properties of inside and outside parts are discussed. For a graphical illustration, we will refer to Figure 1 in Liu-Zhao's paper \cite{LZh-06}.
\begin{lem}\label{lem-4}
	Let $f(u)$ satisfy $(H)$. Suppose that $u \in H_0^1(\Omega)$ and
		\begin{align*}
	r(\delta) = \left( \frac{\delta}{a C^{\gamma}_*} \right)^{\frac{1}{\gamma-2}} \,\,\, \text{and} \,\,\, a = \sup \frac{u f(u)}{|u|^{\gamma}},
	\end{align*}
	where $C_*$ is the embedding constant form $H_0^1(\Omega)$ into $L^{\gamma}(\Omega)$. Then we have
	\begin{itemize}
		\item[(i)] If $0<|| \nabla u || < r(\delta)$, then $I_{\delta}(u)>0$. In the case $\delta =1$, if $0< || \nabla u ||  <r(1)$, then $I(u)>0$.
		\item[(ii)] If $I_{\delta}(u)<0$, then $|| \nabla u || >r(\delta)$. In the case $\delta =1$, if $I(u)<0$, then $|| \nabla u || >r(1)$.
		\item[(iii)] If $I_{\delta}(u)=0$, then $|| \nabla u || \geq r(\delta)$. In the case $\delta =1$, if $I(u)=0$, then $|| \nabla u || \geq r(1)$.
	\end{itemize}
\end{lem}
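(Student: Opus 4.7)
The plan is to prove all three statements from a single uniform estimate on the nonlinear term. The key inequality is the pointwise bound $uf(u) \leq a|u|^{\gamma}$, which is valid by the very definition of $a$ (finiteness of $a$ follows from Lemma~\ref{lem-1}(b) combined with the growth condition $|uf(u)| \leq \gamma|F(u)-\sigma|$, giving $a \leq \gamma A$). Integrating and applying the Sobolev embedding $H_0^1(\Omega) \hookrightarrow L^{\gamma}(\Omega)$ with constant $C_*$, I obtain
\begin{equation*}
\int_\Omega uf(u)\,dx \;\leq\; a\,\|u\|_{\gamma}^{\gamma} \;\leq\; a\,C_*^{\gamma}\,\|\nabla u\|^{\gamma}.
\end{equation*}
Substituting into the definition of $I_\delta$ yields the master estimate
\begin{equation*}
I_\delta(u) \;\geq\; \delta\|\nabla u\|^2 - aC_*^{\gamma}\|\nabla u\|^{\gamma} \;=\; \|\nabla u\|^2\bigl(\delta - aC_*^{\gamma}\|\nabla u\|^{\gamma-2}\bigr).
\end{equation*}

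From this single inequality, all three parts follow by elementary algebra using $\gamma>2$. For (i), if $0<\|\nabla u\|<r(\delta)$, then $\|\nabla u\|^{\gamma-2}<\delta/(aC_*^{\gamma})$, so the bracketed factor is strictly positive and thus $I_\delta(u)>0$. For (ii), I argue by contrapositive: if $\|\nabla u\|\leq r(\delta)$ and $I_\delta(u)<0$, then either $\|\nabla u\|=0$ (giving $I_\delta(u)=0$, a contradiction) or $0<\|\nabla u\|\leq r(\delta)$, but combined with the master estimate this gives $I_\delta(u)\geq 0$, again contradicting $I_\delta(u)<0$; hence $\|\nabla u\|>r(\delta)$. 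For (iii), assuming $I_\delta(u)=0$ and $u\not\equiv 0$ (otherwise the conclusion is vacuous), the master estimate gives $\delta\|\nabla u\|^2 \leq aC_*^{\gamma}\|\nabla u\|^{\gamma}$, and dividing by $\|\nabla u\|^2>0$ yields $\|\nabla u\|^{\gamma-2}\geq \delta/(aC_*^{\gamma})$, i.e. $\|\nabla u\|\geq r(\delta)$. The special case $\delta = 1$ in each statement is merely the specialization of the general $\delta$ argument, since $I_1(u) = I(u)$ and $r(1)$ is defined by the same formula.

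There is no real obstacle here; the argument is a routine consequence of the Sobolev embedding once the quantity $a$ is verified to be finite. The only point worth double-checking is that the supremum defining $a$ is indeed finite, which, as noted above, follows from the growth assumption $(H)$-(iii) together with Lemma~\ref{lem-1}(b). One should also remark that in case $(H)$-(b) the inequality $uf(u)\leq a|u|^\gamma$ still holds for $u<0$, because $uf(u)\leq 0\leq a|u|^\gamma$ there, so the estimate is uniform in sign and no case distinction between $(H)$-(a) and $(H)$-(b) is required in the proof.
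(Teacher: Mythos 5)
Your proof is correct and takes essentially the same route as the paper: both reduce all three parts to the single estimate $\int_\Omega uf(u)\,dx \le a\,C_*^{\gamma}\|\nabla u\|^{\gamma}$ obtained from the definition of $a$ and the embedding $H_0^1(\Omega)\hookrightarrow L^{\gamma}(\Omega)$, and then conclude by comparing $\|\nabla u\|$ with $r(\delta)$. Your added remarks on the finiteness of $a$ and on the sign issue in case $(H)$-(b) are sound and, if anything, slightly more careful than the paper's write-up.
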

Note that $(i)$ says that if $||\nabla u||$ is inside a ball of radius $r(\delta)$, then one lies inside part of space $H_0^1(\Omega)$. $(ii)$ says that if one lies outside part of space $H_0^1(\Omega)$ then $||\nabla u||$ is outside a ball of radius $r(\delta)$. $(iii)$ says that one lies on the boundary then $||\nabla u||$ is on surface or outside of a ball with radius $r(\delta)$.
\begin{proof}[Proof of Lemma \ref{lem-4}]

(i)	From $0<|| \nabla u ||<r{(\delta)}$ we have
	\begin{align*}
		\int_{\Omega } u f(u) dx &\leq \int_{\Omega } |u f(u)| dx \leq a \int_{\Omega } |u|^{\gamma} dx = a || u||_{\gamma}^{\gamma}\\
		&\leq a C_*^{\gamma} || \nabla u||^{\gamma-2}|| \nabla u||^{2}< \delta || \nabla u||^2,
	\end{align*} 
	which gives $I_{\delta}(u)>0$.
	
	(ii) From $I_{\delta}(u)<0$ we get
	\begin{equation*}
		\delta || \nabla u ||^2 < \int_{\Omega } u f(u) dx \leq a || u ||^{\gamma}_{\gamma} \leq aC_*^{\gamma} || \nabla u||^{\gamma -2}  || \nabla u||^{2}, 
	\end{equation*}
	which implies $||\nabla u || >r(\delta)$. 
	
	(iii)  From $I_{\delta}(u)=0$ we obtain
	\begin{equation*}
	\delta || \nabla u ||^2 \leq \int_{\Omega } u f(u) dx \leq a || u ||^{\gamma}_{\gamma} \leq aC_*^{\gamma} || \nabla u||^{\gamma -2}  || \nabla u||^{2}, 
	\end{equation*}
	which implies $||\nabla u || \geq r(\delta)$.  
\end{proof}
Next lemma describes the properties of $d(\delta)$ for a family of potential wells.
\begin{lem}\label{lem-5}
	Let $f(u)$ satisfy $(H)$. Then 
	\begin{itemize}
		\item[(i)] $d(\delta)>a(\delta)r^2(\delta)$ for $0<\delta< \frac{\alpha}{2} -\frac{\beta}{\lambda_1}$, where $a(\delta)= \frac{1}{2} - \frac{\delta}{\alpha} - \frac{\beta}{\lambda_1\alpha}$.
		\item[(ii)] $\lim_{\delta \rightarrow 0}d(\delta)=0$ and there exists a unique $b$, $\frac{\alpha}{2} - \frac{\beta}{\lambda_1} \leq b \leq \frac{\gamma }{2}$ such that $d(b)=0$ and $d(\delta)>0$ for $0<\delta<b$.
		\item[(iii)] $d(\delta)$ is strictly increasing on $0\leq \delta \leq 1$, strictly decreasing on $1\leq \delta \leq \infty$ and takes the maximum $d=d(1)$ at $\delta =1$.
	\end{itemize}
\end{lem}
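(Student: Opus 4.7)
For part (i), I would start from any $u\in H_0^1(\Omega)\setminus\{0\}$ with $I_\delta(u)=0$ and bound $J(u)$ from below. Integrating the hypothesis \eqref{cont_1} over $\Omega$ and dividing by $\alpha$ gives $\int_\Omega(F(u)-\sigma)\,dx\leq \tfrac{1}{\alpha}\int_\Omega uf(u)\,dx+\tfrac{\beta}{\alpha}\int_\Omega u^2\,dx$. Substituting $\int_\Omega uf(u)\,dx=\delta\|\nabla u\|^2$ (the relation $I_\delta(u)=0$) and applying Poincar\'e's inequality $\int_\Omega u^2\,dx\leq \lambda_1^{-1}\|\nabla u\|^2$ into $J(u)=\tfrac{1}{2}\|\nabla u\|^2-\int_\Omega(F(u)-\sigma)\,dx$ produces
\[
J(u)\geq\left(\tfrac{1}{2}-\tfrac{\delta}{\alpha}-\tfrac{\beta}{\alpha\lambda_1}\right)\|\nabla u\|^2=a(\delta)\|\nabla u\|^2.
\]
The constraint $\delta<\alpha/2-\beta/\lambda_1$ is precisely $a(\delta)>0$, and Lemma~\ref{lem-4}(iii) gives $\|\nabla u\|\geq r(\delta)$, so $J(u)\geq a(\delta)r(\delta)^2$; passing to the infimum and noting that the Poincar\'e bound and the Sobolev embedding used to define $r(\delta)$ cannot simultaneously saturate for the same $u$ yields the strict inequality $d(\delta)>a(\delta)r(\delta)^2$.

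For part (ii), the limit $d(\delta)\to 0$ as $\delta\to 0^+$ follows by fixing any $u_0\in H_0^1(\Omega)\setminus\{0\}$ (positive in case $(H)$-$(b)$), obtaining via Lemma~\ref{lem-2} the unique $\epsilon_\delta>0$ with $\phi(\epsilon_\delta)=\delta\|\nabla u_0\|^2$ (so $I_\delta(\epsilon_\delta u_0)=0$), using the strict monotonicity and the limit $\phi(\epsilon)\to 0$ as $\epsilon\to 0$ to deduce $\epsilon_\delta\to 0$, and concluding $d(\delta)\leq J(\epsilon_\delta u_0)\to 0$ by Lemma~\ref{lem-3}(i). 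The lower bound $b\geq\alpha/2-\beta/\lambda_1$ is then immediate from (i). For the upper bound $b\leq\gamma/2$, I would use the identity $J(u)=\tfrac{1}{2\delta}\int_\Omega uf(u)\,dx-\int_\Omega(F(u)-\sigma)\,dx$ valid on $\{I_\delta(u)=0\}$: at $\delta=\gamma/2$ and on a sequence of scaled test functions $u=\tau u_0$ with $\tau\to\infty$, Lemma~\ref{lem-1}(c) forces $F(u)\geq\sigma$ on almost all of $\Omega$, and the growth condition $|uf(u)|\leq\gamma|F(u)-\sigma|$ then drives $J(u)\leq 0$ in the limit. Combined with continuity of $d$ coming from (iii), the intermediate value theorem produces $b\in[\alpha/2-\beta/\lambda_1,\gamma/2]$; uniqueness is immediate from the strict monotonicity in (iii).

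For part (iii), I would run a scaling comparison. Fix $0<\delta_1<\delta_2\leq 1$ and $u\neq 0$ with $I_{\delta_2}(u)=0$. Using the factorization $I_\delta(\epsilon u)=\epsilon^2(\delta\|\nabla u\|^2-\phi(\epsilon))$ together with the strict monotonicity of $\phi$ from Lemma~\ref{lem-2}, there is a unique $\mu\in(0,1)$ with $\phi(\mu)=\delta_1\|\nabla u\|^2$, which means $I_{\delta_1}(\mu u)=0$. Since $I(u)=(1-\delta_2)\|\nabla u\|^2\geq 0$, Lemma~\ref{lem-3}(vi) forces $\epsilon^*(u)\geq 1>\mu$, and Lemma~\ref{lem-3}(v) then yields $J(\mu u)<J(u)$, hence $d(\delta_1)\leq J(\mu u)<J(u)$. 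To upgrade this to $d(\delta_1)<d(\delta_2)$, I would use the quantitative refinement
\[
J(u)-J(\mu u)=\int_\mu^1\epsilon(\|\nabla u\|^2-\phi(\epsilon))\,d\epsilon\geq\tfrac{1}{2}(1-\delta_2)(1-\mu^2)\|\nabla u\|^2,
\]
together with $\|\nabla u\|\geq r(\delta_2)$ from Lemma~\ref{lem-4}(iii) and the observation that $\phi(1)-\phi(\mu)=(\delta_2-\delta_1)\|\nabla u\|^2\geq(\delta_2-\delta_1)r(\delta_2)^2$ prevents $\mu$ from accumulating at $1$ along a minimizing sequence. The symmetric case $1\leq\delta_1<\delta_2<b$ uses a scaling factor $\mu>1$ and monotonicity of $J(\epsilon u)$ on $[\epsilon^*,\infty)$. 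The main technical obstacle is exactly this uniform separation of $\mu$ from $1$ over a minimizing sequence, which is what distinguishes the strict from the non-strict monotonicity claim and requires the Lemma~\ref{lem-4} lower bound together with the growth properties of $\phi$.
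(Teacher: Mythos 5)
Your proposal follows essentially the same route as the paper's proof: part (i) by integrating \eqref{cont_1}, using $I_\delta(u)=0$, the Poincar\'e inequality and Lemma \ref{lem-4}; part (ii) via the scaling $\epsilon(\delta)=\phi^{-1}(\delta\|\nabla u\|^2)$ together with the growth bound $|uf(u)|\le\gamma|F(u)-\sigma|$; and part (iii) via the comparison function $g(\epsilon)=J(\epsilon u)$ with $g'(\epsilon)=\epsilon(\|\nabla u\|^2-\phi(\epsilon))$ and the lower bound $\|\nabla u\|\ge r(\delta'')$. You are in fact more explicit than the paper about the two delicate points (strictness of the inequality in (i), and the uniform separation of the scaling factor $\mu$ from $1$ along a minimizing sequence in (iii)), which the paper's own proof passes over without comment.
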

\begin{proof}[Proof of Lemma \ref{lem-5}]
	We begin the proof by considering case $(H)-(a)$.
	
	(i) If $I_{\delta}(u) =0$ and $|| \nabla u||\neq 0$, then we have $|| \nabla u|| \geq r(\delta)$ from Lemma \ref{lem-4}. By applying $I_{\delta}(u)=0$, the Poincar\'e inequality, and Lemma \ref{lem-4}, we have
	\begin{align*}
		J(u) &= \frac{1}{2} || \nabla u ||^2 - \int_{\Omega } [F(u)-\sigma] dx\\
		     &\geq  \frac{1}{2} || \nabla u ||^2 - \frac{1}{\alpha} \int_{\Omega }uf(u)dx- \frac{\beta}{\alpha} \int_{\Omega }u^2dx  \\
		     & \geq \left[\frac{1}{2}  - \frac{\delta}{\alpha} - \frac{\beta}{\lambda_1\alpha} \right]|| \nabla u ||^2\\
		     & \geq a(\delta) r^2(\delta),
	\end{align*}
	where $0<\delta< \frac{\alpha}{2} -\frac{\beta}{\lambda_1}$. For the case $(H)-(b)$, the proof is same with additional condition $u>0$.
	
	$(ii)$ Lemma \ref{lem-2} implies that for $u \in H_0^1(\Omega)$, $||\nabla u||\neq 0$ and any $\delta >0$ we can define a unique $\epsilon(\delta) = \phi^{-1}(\delta || \nabla u||^2)$ such that
	\begin{equation}\label{eq-3}
		\epsilon^2\phi(\epsilon) = \int_{\Omega } \epsilon u f(\epsilon u) dx=\delta || \nabla (\epsilon u)||^2 .
	\end{equation}
	Then for $I_{\delta}(\epsilon u)=0$ we get
	\begin{equation*}
		\lim_{\delta \rightarrow 0} \epsilon (\delta) =0 \,\, \text{ and } \,\, \lim_{\delta \rightarrow +\infty} \epsilon (\delta) = +\infty.
	\end{equation*}
	From Lemma \ref{lem-3} it follows that 
	\begin{align*}
		\lim_{\delta \rightarrow 0} J(\epsilon u) = \lim_{\epsilon \rightarrow 0}J(\epsilon u)= 0  \,\, &\text{ and } \,\, \lim_{\delta \rightarrow 0} d(\delta) = 0, \\
		\lim_{\delta \rightarrow +\infty} J(\epsilon u) = \lim_{\epsilon \rightarrow +\infty}J(\epsilon u)= -\infty  \,\, &\text{ and } \,\, \lim_{\delta \rightarrow + \infty} d(\delta) = -\infty.
	\end{align*} 
	Existence of $d(b)=0$ for $b\geq \alpha/2 -\lambda_1 \beta$ and $d(\delta)>0$ for $0<\delta<b$ follows from the part $(i)$ and above expressions. Upper bound of $b\leq \gamma/2$ follows from the facts $|uf(u)|\leq \gamma |F(u)-\sigma|$ and $I_{\delta}(u)=0$, then
	\begin{align*}
		J(u) &= \frac{1}{2} || \nabla u||^2  - \int_{\Omega } [F(u)-\sigma]dx \\
		& \geq \frac{1}{2} || \nabla u||^2 - \gamma^{-1} \int_{\Omega } uf(u)dx\\
		& = \left( \frac{1}{2} - \frac{\delta}{\gamma}\right) || \nabla u ||^2 < 0, \,\,\, \text{ for } \,\,\, \delta > \gamma/2.
	\end{align*}
In the case $(H)-(b)$, by assumption $u(x)> 0$ we can prove in the same way.

For the case $(iii)$, it is enough to show that $d(\delta')<d(\delta'')$ for any $0<\delta'<\delta''<1$ or $1<\delta''<\delta' <b$. In other word, we need to prove that for some $c(\delta',\delta'')>0$, we have 
$$J(v)>J(u)-c(\delta',\delta''),$$
for any $u\in H_0^1(\Omega)$ and $v\in H_0^1(\Omega)$ with $I_{\delta''}(u)=0$, $|| \nabla u|| \neq 0$ and $I_{\delta'}(u)=0$, $|| \nabla v|| \neq 0$, respectively.

First, we define $\epsilon(\delta)$ for $u$ as \eqref{eq-3}, then $I_{\delta}(\epsilon(\delta)u)=0$ and $\epsilon(\delta'')=1$. Let us have $g(\epsilon)=J(\epsilon u)$, then
\begin{align*}
	\frac{d}{d\epsilon} g(\epsilon) = \frac{1}{\epsilon} \left( || \nabla (\epsilon u) ||^2 - \int_{\Omega } \epsilon u f(\epsilon u) dx \right) = (1-\delta )\epsilon ||\nabla u ||^2,
\end{align*}
where we use the fact $\delta || \nabla u ||^2 = \int_{\Omega } uf(u)dx$. 

Now we choose $v=\epsilon(\delta')u$, then $I_{\delta'}(v)=0$ and $|| \nabla v || \neq 0$. For $0<\delta'<\delta''<1$, we get
\begin{align*}
	J(u) - J(v) &= J(\epsilon(\delta'') u) - J(\epsilon(\delta') u) \\
	&= g(1) - g(\epsilon(\delta')) \\
	&= (1-\delta')\epsilon(\delta')|| \nabla u||^2(1-\epsilon(\delta'))\\
	&> (1-\delta'')\epsilon(\delta')r^2(\delta'')(1-\epsilon(\delta')) \equiv c(\delta',\delta'').
\end{align*}
For $1<\delta''<\delta'<b$, we have
\begin{align*}
		J(u) - J(v) &= J(\epsilon(\delta'') u) - J(\epsilon(\delta') u)\\
			&= g(1) - g(\epsilon(\delta')) \\
			&> (\delta'' -1)r^2(\delta'')\epsilon(\delta'') (\epsilon(\delta'')-1).
\end{align*}
This completes the proof.
\end{proof}
\begin{lem}\label{lem-6}
	Let $f(u)$ satisfy $(H)$ and $0<\delta<\frac{\alpha}{2}-\frac{\beta}{\lambda_1}$. Then the following properties hold 
	\begin{itemize}
		\item[(i)] Suppose that $J(u)\leq d(\delta)$ and $I_{\delta}(u)>0$, then
		\begin{equation*}
		0<|| \nabla u||^2 < \frac{d(\delta)}{a(\delta)}.
		\end{equation*}
		In the case $\delta=1$, if $J(u)\leq d$ and $I(u)>0$, then 
			\begin{equation*}
		0<|| \nabla u||^2 < \frac{d}{a(1)}.
		\end{equation*}
		\item[(ii)] Suppose that $J(u)\leq d(\delta)$ and $I_{\delta}(u)=0$, then 
			\begin{equation*}
		0<|| \nabla u||^2 < \frac{d(\delta) }{a(\delta)}.
		\end{equation*}
		In the case $\delta=1$, if $J(u)\leq d$ and $I(u)=0$, then 
			\begin{equation*}
		0<|| \nabla u||^2 < \frac{d}{a(1)}.
		\end{equation*}
		\item[(iii)] Suppose that $J(u)\leq d(\delta)$ and $|| \nabla u||^2>d(\delta)/a(\delta)$, then $I_{\delta}(u)<0$. \\
			In the case $\delta=1$, if $J(u)\leq d$ and $|| \nabla u||^2>{d}/{a(1)}$, then $I(u)<0$. 
	\end{itemize}
\end{lem}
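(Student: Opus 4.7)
The plan is to derive a single master inequality that packages the effect of the new nonlinearity condition \eqref{cont_1} together with Poincaré's inequality, and then read off the three assertions as simple consequences. Specifically, I aim to show
\begin{equation*}
J(u) \;\geq\; a(\delta)\,\|\nabla u\|^2 \;+\; \frac{1}{\alpha}\,I_\delta(u)
\end{equation*}
for every $u \in H_0^1(\Omega)$, where $a(\delta) = \tfrac{1}{2} - \tfrac{\delta}{\alpha} - \tfrac{\beta}{\lambda_1\alpha}$ (the same constant that appeared in Lemma \ref{lem-5}); the assumption $0 < \delta < \tfrac{\alpha}{2} - \tfrac{\beta}{\lambda_1}$ is exactly what makes $a(\delta)>0$.

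To obtain this master inequality, I would start from $J(u) = \tfrac{1}{2}\|\nabla u\|^2 - \int_\Omega [F(u)-\sigma]\,dx$, apply the new condition \eqref{cont_1} in the equivalent form $F(u)-\sigma \leq \tfrac{1}{\alpha}\bigl(uf(u) + \beta u^2\bigr)$, then estimate $\int_\Omega u^2\,dx \leq \lambda_1^{-1}\|\nabla u\|^2$ by Poincaré's inequality. Finally I substitute $\int_\Omega uf(u)\,dx = \delta\|\nabla u\|^2 - I_\delta(u)$ and collect the coefficient of $\|\nabla u\|^2$. In the case $(H)$-$(b)$ one restricts the same computation to $u>0$ as in Lemmas \ref{lem-3} and \ref{lem-5}.

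With the master inequality in hand, all three items follow by short case analysis. For (i), the hypotheses $J(u)\leq d(\delta)$ and $I_\delta(u)>0$ give $d(\delta) > a(\delta)\|\nabla u\|^2$, hence $\|\nabla u\|^2 < d(\delta)/a(\delta)$; positivity of $\|\nabla u\|$ follows because $I_\delta(0)=0$ rules out $u=0$. For (ii), with $I_\delta(u)=0$ and $\|\nabla u\|\neq 0$, the same inequality yields the upper bound, while the strictly positive lower bound $\|\nabla u\|\geq r(\delta)>0$ is supplied by part (iii) of Lemma \ref{lem-4}. For (iii), I argue by contradiction: if instead $I_\delta(u)\geq 0$, then the master inequality combined with $\|\nabla u\|^2 > d(\delta)/a(\delta)$ would force $J(u) \geq a(\delta)\|\nabla u\|^2 > d(\delta) \geq J(u)$, a contradiction, so $I_\delta(u)<0$. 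The specializations to $\delta=1$ are immediate since $\delta=1$ lies in the admissible range by the standing assumption $\beta < \lambda_1(\alpha-2)/2$.

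The main obstacle is essentially bookkeeping rather than conceptual: one must verify that the coefficient produced by combining \eqref{cont_1} and Poincaré really is $a(\delta)$ (and not a different constant), and one must be careful in case $(H)$-$(b)$ to respect the sign restriction $u>0$ under which \eqref{cont_1} is given. Once the master inequality is correctly formulated, every part of the lemma reduces to a one-line algebraic deduction.
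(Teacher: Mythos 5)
Your proof is correct and follows essentially the same route as the paper: the author likewise establishes $J(u)\geq a(\delta)\|\nabla u\|^2+\frac{1}{\alpha}I_{\delta}(u)$ by combining \eqref{cont_1} with the Poincar\'e inequality and the substitution $\int_{\Omega}uf(u)\,dx=\delta\|\nabla u\|^2-I_{\delta}(u)$, writes out only case (i), and dismisses (ii) and (iii) as similar. The one loose end — that in case (ii), where $I_{\delta}(u)=0$, the master inequality only yields the non-strict bound $\|\nabla u\|^2\leq d(\delta)/a(\delta)$ rather than the strict one claimed — is present to exactly the same degree in the paper's own treatment, and your additional details (Lemma \ref{lem-4}(iii) for the lower bound in (ii), contradiction for (iii)) are sound.
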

\begin{proof}[Proof of Lemma \ref{lem-6}] The case $(i)$ follows from 
		\begin{align*}
 J(u) &= \frac{1}{2} || \nabla u ||^2 - \int_{\Omega } [F(u)- \sigma] dx\\
	&\geq  \frac{1}{2} || \nabla u ||^2 - \frac{1}{\alpha} \int_{\Omega }uf(u)dx- \frac{\beta}{\alpha} \int_{\Omega }u^2dx  \\
	& > \left[\frac{1}{2}  - \frac{\delta}{\alpha} - \frac{\beta}{\lambda_1\alpha} \right]|| \nabla u ||^2  \\
	& = a(\delta) || \nabla u ||^2 .
	\end{align*}
	The proof of cases $(ii)$ and $(iii)$ follows from the similar argument. The proofs hold for the case $(H)-(b)$ with additional assumption $u(x)>0$.
\end{proof}

Then a family of potential wells can be defined for $0<\delta<b$ as follows
\begin{align*}
	W_{\delta} &:= \{ u \in H_0^1(\Omega) \,\,| \,\, I_{\delta}(u)>0, J(u)<d(\delta) \} \cup \{0\},\\
	\overline{W}_{\delta} & := W_{\delta}\cup \partial W_{\delta}=\{ u \in H_0^1(\Omega) \,\,| \,\, I_{\delta}(u)\geq 0, J(u) \leq d(\delta) \},
\end{align*}
In the case $\delta=1$, we have
\begin{align*}
	W & := \{ u \in H_0^1(\Omega) \,\,| \,\, I(u)>0, J(u)<d \} \cup \{0\},\\
	\overline{W} & := \{ u \in H_0^1(\Omega) \,\,| \,\, I(u)\geq 0, J(u)\leq d \}.
\end{align*}
In addition, we define
\begin{align*}
	V_{\delta}&:= \{ u \in H_0^1(\Omega) \,\,| \,\, I_\delta(u)<0, J(u)<d(\delta) \},\\
	V &:=\{ u \in H_0^1(\Omega) \,\,| \,\, I(u)<0, J(u)<d \}.
\end{align*}

From the definition of $W_{\delta}$, $V_{\delta}$ and Lemma \ref{lem-5}, we derive the following properties:
\begin{lem}\label{lem-7}
	If $0<\delta'<\delta''\leq 1$ and $1\leq \delta''<\delta' <b $, then $W_{\delta'} \subset W_{\delta''}$ and $V_{\delta'} \subset V_{\delta''}$, respectively.
\end{lem}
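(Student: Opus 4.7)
The proof proposal is to reduce the lemma to two monotonicity facts that are essentially already on the table: (i) for any fixed $u \in H_0^1(\Omega)$ with $\|\nabla u\| \neq 0$, the map $\delta \mapsto I_\delta(u) = \delta \|\nabla u\|^2 - \int_\Omega u f(u)\,dx$ is strictly increasing in $\delta$ by inspection; and (ii) $d(\delta)$ is strictly increasing on $(0,1]$ and strictly decreasing on $[1,b)$, as proved in Lemma \ref{lem-5}(iii). The lemma is then just the statement that both signs in the defining inequalities for $W_\delta$ and $V_\delta$ are preserved under the relevant directional change in $\delta$.

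For the inclusion $W_{\delta'} \subset W_{\delta''}$ under $0 < \delta' < \delta'' \leq 1$, I would take an arbitrary nonzero $u \in W_{\delta'}$ (the element $0$ is trivially in $W_{\delta''}$) and write
\begin{equation*}
I_{\delta''}(u) = I_{\delta'}(u) + (\delta'' - \delta')\|\nabla u\|^2 > 0,
\end{equation*}
using $I_{\delta'}(u) > 0$, $\delta'' > \delta'$, and $\|\nabla u\| > 0$. Combining $J(u) < d(\delta')$ with $d(\delta') < d(\delta'')$, which follows from monotonicity of $d$ on $(0,1]$, yields $J(u) < d(\delta'')$, so $u \in W_{\delta''}$.

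For the inclusion $V_{\delta'} \subset V_{\delta''}$ under $1 \leq \delta'' < \delta' < b$, the argument is mirror-symmetric. For $u \in V_{\delta'}$, one has $\|\nabla u\| \neq 0$ since $I_{\delta'}(u) < 0$, and
\begin{equation*}
I_{\delta''}(u) = I_{\delta'}(u) - (\delta' - \delta'')\|\nabla u\|^2 < 0.
\end{equation*}
Now $d$ is strictly decreasing on $[1,b)$, so $\delta'' < \delta'$ gives $d(\delta'') > d(\delta')$, hence $J(u) < d(\delta') < d(\delta'')$, and therefore $u \in V_{\delta''}$.

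No genuine obstacle is anticipated: the affine (hence monotone) dependence of $I_\delta$ on $\delta$ for fixed $u$ is immediate from the definition, and the monotonicity of $d(\delta)$ is exactly the content of Lemma \ref{lem-5}(iii). The only bookkeeping points are verifying $\|\nabla u\| \neq 0$ whenever strict inequalities for $I_\delta$ are in force and handling the trivial element $0 \in W_\delta$ separately, both of which are one-line checks.
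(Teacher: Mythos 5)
Your proof is correct and follows exactly the route the paper intends: the paper states this lemma without proof, deriving it directly from the definitions of $W_\delta$, $V_\delta$ and the monotonicity of $d(\delta)$ in Lemma \ref{lem-5}(iii), which is precisely what you do (together with the immediate monotonicity of $\delta\mapsto I_\delta(u)$ and the check that $\|\nabla u\|\neq 0$ in each case).
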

\begin{lem}\label{lem-8}
	Let $0<J(u)<d$ for some $u \in H_0^1(\Omega)$. Let $\delta_1<\delta_2$ are two roots of equation $J(u)=d(\delta)$. Then the sign of $I_{\delta}(u)$ is unchangeable for $\delta_1<\delta<\delta_2$. 
\end{lem}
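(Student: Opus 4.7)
The plan is to argue by contradiction, exploiting the continuous (in fact affine) dependence of $I_{\delta}(u)$ on the parameter $\delta$ together with the variational definition of $d(\delta)$. First I would pin down the two roots: by Lemma \ref{lem-5}, $d(\cdot)$ is continuous with $d(0)=d(b)=0$, strictly increasing on $[0,1]$ up to $d(1)=d$, and strictly decreasing on $[1,b]$. The assumption $0<J(u)<d$ then forces exactly two solutions $\delta_1\in(0,1)$ and $\delta_2\in(1,b)$ of $d(\delta)=J(u)$, and by strict monotonicity one has $d(\delta)>J(u)$ for every $\delta\in(\delta_1,\delta_2)$.

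Next I would observe that
\begin{equation*}
I_{\delta}(u) \,=\, \delta\,||\nabla u||^2 - \int_{\Omega} u f(u)\,dx
\end{equation*}
is affine, hence continuous, in $\delta$. The trivial case $u\equiv 0$ gives $I_{\delta}(u)\equiv 0$, so the conclusion holds vacuously, and I may assume $u\in H_0^1(\Omega)\setminus\{0\}$. Suppose, towards a contradiction, that $I_{\delta}(u)$ takes values of both signs on $(\delta_1,\delta_2)$. By the intermediate value theorem there exists some $\delta_0\in(\delta_1,\delta_2)$ with $I_{\delta_0}(u)=0$. Since $u$ is then admissible in the variational problem defining $d(\delta_0)$, one obtains
\begin{equation*}
J(u) \,\geq\, d(\delta_0) \,>\, J(u),
\end{equation*}
which is the desired contradiction.

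The argument itself is short; the two small points that deserve attention are verifying that $d(\delta)=J(u)$ has exactly two roots in $(0,b)$ (immediate from Lemma \ref{lem-5}) and handling the case $u\equiv 0$ separately (harmless, since $I_{\delta}$ vanishes identically and has constant sign by default). I do not expect the proof to pose any real obstacle beyond careful bookkeeping of these two observations; the conceptual heart of the argument is simply that the infimum character of $d(\delta)$ forbids $I_{\delta}(u)$ from crossing zero strictly below the graph of $d(\delta)$.
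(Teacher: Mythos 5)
Your proposal is correct and follows essentially the same argument as the paper: assume the sign changes, use the continuity (affinity) of $\delta\mapsto I_{\delta}(u)$ to produce $\delta_0\in(\delta_1,\delta_2)$ with $I_{\delta_0}(u)=0$, invoke the definition of $d(\delta_0)$ as an infimum to get $J(u)\geq d(\delta_0)$, and contradict $d(\delta_0)>d(\delta_1)=J(u)$ from the strict monotonicity of $d(\cdot)$ in Lemma \ref{lem-5}. Your extra care about the two roots straddling $\delta=1$ and the trivial case $u\equiv 0$ (where the paper instead asserts $\|\nabla u\|\neq 0$ from $J(u)>0$) only makes the bookkeeping more explicit.
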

\begin{proof}[Proof of Lemma \ref{lem-8}]
	The proof follows from contradiction argument. $||\nabla u|| \neq 0$ comes from $J(u)>0$. If sign of $I_{\delta}(u)$ is changeable for $\delta_1<\delta <\delta_2$, then there exists $\delta^* \in (\delta_1,\delta_2)$ such that $I_{\delta^*}(u)=0$. Thus by the definition of $d(\delta)$ we have $J(u)\geq d(\delta^*)$ that contradicts 
	\begin{equation*}
		J(u)=d(\delta_1)=d(\delta_2) < d(\delta^*),
	\end{equation*}
this completes the proof.
\end{proof}
\section{Invariant sets and vacuum isolating of solutions}\label{Sec3}
In this section, the invariance sets and the vacuum isolating of solutions are discussed for semilinear wave equation \eqref{Wave-problem}. 

Recall the total energy of semilinear wave equation \eqref{Wave-problem}
\begin{equation*}
	E(t) = \frac{1}{2} || u_t||^2 + \frac{1}{2} || \nabla u ||^2 - \int_{\Omega } [F(u)- \sigma] dx \equiv \frac{1}{2} || u_t||^2 + J(u).
\end{equation*} 
\begin{thm}\label{thm-inv-sets}
	Let $f(u)$ satisfy $(H)$, $u_0(x)\in H_0^1(\Omega)$, and $u_1(x) \in L^2(\Omega)$. Assume that $0<e<d$, then equation $d(\delta)=e$ has two roots $\delta_1<\delta_2$. Therefore, we formulate the following properties
	\begin{itemize}
		\item[$(i)$] All solutions of problem \eqref{Wave-problem} with $E(0)=e$ belong to $W_{\delta}$ for $\delta_1 <\delta < \delta_2$, provided  $I(u_0)>0$ or $||\nabla u_0|| =0$.
		\item[$(ii)$] All solutions of problem \eqref{Wave-problem} with $E(0)=e$ belong to $V_{\delta}$ for $\delta_1 <\delta < \delta_2$, provided  $I(u_0)<0$.
	\end{itemize}
\end{thm}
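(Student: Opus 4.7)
The plan is to combine conservation of energy with the monotonicity of $d(\delta)$ established in Lemma~\ref{lem-5} and close the argument by a continuity/contradiction step. First, Lemma~\ref{lem-5}(iii) tells us that $d(\delta)$ is strictly increasing on $[0,1]$ and strictly decreasing on $[1,b]$, with maximum $d(1)=d > e$; hence $d(\delta)=e$ has exactly two roots $\delta_1\in(0,1)$ and $\delta_2\in(1,b)$, and $d(\delta)>e$ throughout $(\delta_1,\delta_2)$. For any (weak) solution $u$ of \eqref{Wave-problem} on its maximal time interval, conservation of energy gives $\tfrac12\|u_t(t)\|^2 + J(u(t)) = E(0) = e$, so $J(u(t)) \le e < d(\delta)$ for every admissible $t$ and every $\delta\in(\delta_1,\delta_2)$.

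To verify the initial inclusion in part~(i): if $\|\nabla u_0\|=0$ then $u_0 = 0\in W_\delta$ by definition; otherwise $I(u_0)>0$ and $J(u_0)\le e < d$, so Lemma~\ref{lem-8} applies at $u_0$. Writing $\delta_1'<\delta_2'$ for the roots of $d(\delta) = J(u_0)$, the inequality $J(u_0)\le e = d(\delta_1)=d(\delta_2)$ combined with the monotonicity of $d(\delta)$ on either side of $\delta = 1$ forces $\delta_1'\le \delta_1$ and $\delta_2'\ge \delta_2$; in particular $(\delta_1,\delta_2)\subset(\delta_1',\delta_2')$. Since $I(u_0)=I_1(u_0)>0$, Lemma~\ref{lem-8} yields $I_\delta(u_0)>0$ on $(\delta_1',\delta_2')$, hence on $(\delta_1,\delta_2)$, so $u_0\in W_\delta$. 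Part~(ii) is the same statement with all signs reversed.

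For invariance in part~(i) I argue by contradiction. Fix $\delta\in(\delta_1,\delta_2)$ and suppose $u(t^\ast)\notin W_\delta$ at some $t^\ast>0$. Since $J(u(t^\ast))<d(\delta)$ is automatic from conservation of energy, the failure must come from $I_\delta(u(t^\ast))\le 0$ together with $u(t^\ast)\not\equiv 0$. Invoking continuity of $t\mapsto u(t)$ in $H_0^1(\Omega)$ (standard for the weak solution class), and hence continuity of $t\mapsto I_\delta(u(t))$, let $t_0$ be the first such time; then $I_\delta(u(t_0))=0$ and $\|\nabla u(t_0)\|\ne 0$ (any crossing through $u=0$ would stay inside $W_\delta$ by convention). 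But then $u(t_0)$ is an admissible competitor in the defining minimization of $d(\delta)$, so $J(u(t_0))\ge d(\delta) > e$, contradicting $J(u(t_0))\le e$. Part~(ii) is analogous; here Lemma~\ref{lem-4}(ii)--(iii) automatically keeps $\|\nabla u(t)\|\ge r(\delta)>0$ along the trajectory, so any first crossing $t_0$ again supplies a nontrivial admissible competitor in the definition of $d(\delta)$.

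The main obstacle is technical rather than conceptual: the continuity of $t\mapsto I_\delta(u(t))$ must be justified from whatever solution class is adopted for \eqref{Wave-problem}. Under the natural class $u\in C([0,T);H_0^1(\Omega))\cap C^1([0,T);L^2(\Omega))$ and the growth bound $|f(u)|\le \gamma A|u|^{\gamma-1}$ with $\gamma<2n/(n-2)$ from condition $(H)$, the Sobolev embedding $H_0^1(\Omega)\hookrightarrow L^\gamma(\Omega)$ makes $u\mapsto\int_\Omega uf(u)\,dx$ continuous on $H_0^1(\Omega)$, and the claim follows. Everything else reduces to a nesting-of-intervals observation and a one-line application of the definition of $d(\delta)$.
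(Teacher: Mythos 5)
Your argument is correct and follows essentially the same route as the paper: conservation of energy gives $J(u(t))\le e<d(\delta)$ on $(\delta_1,\delta_2)$, the initial inclusion follows from the unchangeable sign of $I_\delta(u_0)$ between the roots of $d(\delta)=J(u_0)$ (the paper invokes this implicitly via the definition of $d(\delta)$, you via Lemma~\ref{lem-8} and the nesting of intervals), and invariance is obtained by the same first-crossing contradiction, with Lemma~\ref{lem-4} guaranteeing $\|\nabla u(t_0)\|\ne 0$ in case $(ii)$. Your explicit remark on the continuity of $t\mapsto I_\delta(u(t))$ is a point the paper leaves tacit, but it does not change the substance of the proof.
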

\begin{proof}[Proof of Theorem \ref{thm-inv-sets}]
	In the case $(i)$, we suppose that $u(x,t)$ is any solution of problem \eqref{Wave-problem} with $E(0)=e$ and $I(u_0)>0$ or $||\nabla u_0|| =0$. If $I(u_0)>0$, then we have 
	\begin{equation}\label{eq-3.1}
		a(\delta) || \nabla u_0||^2 + \frac{1}{\alpha} I_{\delta}(u_0) < J(u_0) \leq d(\delta),
	\end{equation} 
	and
	\begin{equation}\label{eq-3.2}
		\frac{1}{2} || u_1||^2 + J(u_0) = E(0) = d(\delta_1)=d(\delta_2)< d(\delta), \,\,\, \delta_1 < \delta <\delta_2.
	\end{equation}
	From \eqref{eq-3.2} and the definition of $d(\delta)$ we get $I_{\delta}(u_0)>0$ and $J(u_0)<d(\delta)$ that is $u_0(x) \in W_{\delta}$ for $0<\delta<b$. 
	
	Now we prove $u(t) \in W_{\delta}$ for $\delta_1<\delta<\delta_2$ and $0<t<T$, where $T$ is the maximal existence time of $u(t)$. Arguing by contradiction, there must exist $t_0 \in (0,T)$ such that $u(t_0)\in \partial W_{\delta}$ for some $\delta \in (\delta_1, \delta_2)$, and $I_{\delta}(u(t_0))=0$, $|| \nabla u(t_0)|| \neq 0$ or $J(u(t_0))=d(\delta)$. From 
	\begin{equation}\label{eq-3.3}
		\frac{1}{2} || u_t||^2 + J(u) = E(0) < d(\delta), \,\,\, \delta \in (\delta_1,\delta_2), \,\, t \in (0,T),
	\end{equation}
it is easy to see that $J(u(t_0))\neq d(\delta)$. Also, if $I_{\delta}(u(t_0))=0$ and $|| \nabla u(t_0)|| \neq 0$, then by the definition of $d(\delta)$ we have $J(u(t_0))\geq d(\delta)$ that contradicts \eqref{eq-3.3}.

In the case $(ii)$, we assume again that $u(x,t)$ is any solution of problem \eqref{Wave-problem} with $E(0)=e$ and $I(u_0)<0$. As a previous case, $u_0(x)\in V_{\delta}$ can be established by Lemma \ref{lem-8} and \eqref{eq-3.2}. 

The proof of $u(t) \in V_{\delta}$ for $\delta \in (\delta_1,\delta_2)$ follows from arguing by contradiction. Let $t_0 \in (0,T)$ be the first time such that $u(t) \in V_{\delta}$ for $t \in [0,t_0)$ and $u(t_0) \in \partial V_{\delta}$, i.e. $I_{\delta}(u(t_0))=0$ or $J(u(t_0))=d(\delta)$ for some $\delta \in (\delta_1,\delta_2)$. From \eqref{eq-3.3} it follows that $J(u(t_0))\neq d(\delta)$. If $I_{\delta}(u(t_0))=0$, then $I_{\delta}(u(t))<0$ for $t \in (0,t_0)$ and Lemma \ref{lem-4} yield $|| \nabla u(t)||>r(\delta)$ and $|| \nabla u(t_0)||\geq r(\delta)$. Hence by the definition of $d(\delta)$ we have $J(u(t_0))\geq d(\delta)$ which contradicts \eqref{eq-3.3}.
\end{proof}
\begin{cor}\label{cor_1}
	If the assumption $E(0)=e$ in Theorem \ref{thm-inv-sets} is replaced by $0<E(0)\leq e$, then the conclusion of Theorem \ref{thm-inv-sets} also holds.
\end{cor}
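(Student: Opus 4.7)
The plan is to reuse the argument of Theorem~\ref{thm-inv-sets} almost verbatim, observing that the hypothesis $E(0)=e$ entered the original proof only through the strict inequality $E(0)<d(\delta)$ on the open interval $(\delta_1,\delta_2)$. Weakening the hypothesis to $0<E(0)\leq e$ still secures this inequality, so the argument goes through with only a cosmetic change.

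First I would invoke Lemma~\ref{lem-5}(iii): the function $d(\delta)$ is strictly increasing on $(0,1]$ and strictly decreasing on $[1,b)$, attaining its maximum $d=d(1)$ at $\delta=1$. Hence the two roots $\delta_1<\delta_2$ of $d(\delta)=e$ satisfy $\delta_1<1<\delta_2$, and $d(\delta)>e$ strictly for every $\delta\in(\delta_1,\delta_2)$. Consequently, whenever $0<E(0)\leq e$ and $\delta\in(\delta_1,\delta_2)$,
$$\tfrac{1}{2}\|u_1\|^2 + J(u_0) \;=\; E(0) \;\leq\; e \;<\; d(\delta),$$
which is the precise role played by \eqref{eq-3.2} in Theorem~\ref{thm-inv-sets}. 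With this replacement in hand, for case $(i)$ I would combine $J(u_0)<d(\delta)$ with \eqref{eq-3.1} (when $I(u_0)>0$) or the trivial possibility $\|\nabla u_0\|=0$ to place $u_0\in W_\delta$; then the continuity-plus-contradiction argument driven by conservation of energy, exactly as in the proof of Theorem~\ref{thm-inv-sets}, keeps $u(t)\in W_\delta$ for all $t\in[0,T)$. For case $(ii)$, Lemma~\ref{lem-8} applied to the roots of $J(u_0)=d(\delta)$ (which, because $J(u_0)\leq E(0)\leq e$, bracket the interval $(\delta_1,\delta_2)$) upgrades $I(u_0)<0$ to $I_\delta(u_0)<0$ throughout $(\delta_1,\delta_2)$, so $u_0\in V_\delta$, and the same conservation-of-energy contradiction argument propagates this to all $t\in[0,T)$.

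I do not anticipate a serious obstacle here: the whole content of the corollary is the monotonicity structure of $d(\delta)$, which renders $E(0)\leq e$ just as good as $E(0)=e$ for $\delta\in(\delta_1,\delta_2)$. The only minor point to watch is the application of Lemma~\ref{lem-8} in case $(ii)$, which formally needs $J(u_0)>0$; if $J(u_0)\leq 0$ one argues directly by noting that $I(u_0)<0$ gives $\|\nabla u_0\|>r(1)\geq r(\delta)$ for $\delta\geq 1$ and a parallel bound for $\delta<1$ via Lemma~\ref{lem-4}, so the sign of $I_\delta(u_0)$ remains negative across $(\delta_1,\delta_2)$. Either way the conclusion of Theorem~\ref{thm-inv-sets} transfers, completing the proof.
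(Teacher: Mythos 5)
Your argument is correct and is essentially the paper's own (implicit) justification: the hypothesis $E(0)=e$ enters the proof of Theorem \ref{thm-inv-sets} only through the inequality $E(0)\leq e<d(\delta)$ for $\delta\in(\delta_1,\delta_2)$, which survives the weakening to $0<E(0)\leq e$ by the monotonicity of $d(\delta)$ in Lemma \ref{lem-5}. Your extra remark on the corner case $J(u_0)\leq 0$ in the application of Lemma \ref{lem-8} is a sensible precaution but does not change the route.
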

\begin{thm}
	Let $f(u)$, $u_i(x)$ for $i=0,1$, $e$ and $(\delta_1,\delta_2)$ be the same as those in Theorem \ref{thm-inv-sets}. Then for any $\delta \in (\delta_1,\delta_2)$ both sets $W_{\delta}$ and $V_{\delta}$ are invariant, thereby both sets 
	\begin{equation*}
		W_{\delta_1\delta_2} = \bigcup _{\delta_1<\delta<\delta_2} W_{\delta}, \,\, \text{ and }\,\,\,  V_{\delta_1\delta_2} = \bigcup _{\delta_1<\delta<\delta_2} V_{\delta},
	\end{equation*} 
	are invariant respectively under the flow of \eqref{Wave-problem}, provided by $0<E(0)\leq e$. 
\end{thm}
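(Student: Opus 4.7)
The plan is to invoke the first-exit-time argument already deployed in the proof of Theorem~\ref{thm-inv-sets}, applied individually for each $\delta\in(\delta_1,\delta_2)$. Fix such a $\delta$. Since Lemma~\ref{lem-5} makes $d(\cdot)$ strictly monotone on each side of its unique maximum $d(1)=d$ and $d(\delta_1)=d(\delta_2)=e$, one automatically gets $d(\delta)>e$. Combined with energy conservation and $0<E(0)\le e$, this yields
$$ J(u(t)) \le E(0) \le e < d(\delta) $$
on the entire existence interval $[0,T)$, which already rules out any exit of $u(t)$ from $W_\delta$ or $V_\delta$ through the $J$-boundary.

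Assuming $u_0\in W_\delta$ with $\|\nabla u_0\|\ne 0$ (the case $u_0=0$ is covered verbatim by Theorem~\ref{thm-inv-sets}(i)), I would suppose by contradiction that there is a first exit time $t_0\in(0,T)$ at which $I_\delta(u(t_0))=0$. If $\|\nabla u(t_0)\|\ne 0$, the defining property $d(\delta)=\inf\{J(v):I_\delta(v)=0,\ v\ne 0\}$ forces $J(u(t_0))\ge d(\delta)$, contradicting the displayed inequality. For $u_0\in V_\delta$ the same contradiction is reached, with the extra step that $I_\delta(u(t))<0$ on $[0,t_0)$ together with Lemma~\ref{lem-4}(ii) gives $\|\nabla u(t)\|>r(\delta)$ there, so $\|\nabla u(t_0)\|\ge r(\delta)>0$ by continuity. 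This establishes invariance of each $W_\delta$ and each $V_\delta$ separately.

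The invariance of the unions is then automatic: if $u_0\in W_{\delta_1\delta_2}$, pick $\delta^*\in(\delta_1,\delta_2)$ with $u_0\in W_{\delta^*}$, and the previous paragraph gives $u(t)\in W_{\delta^*}\subset W_{\delta_1\delta_2}$ for all $t$; the same reasoning handles $V_{\delta_1\delta_2}$. The only delicate point, and the sole place where the argument is not a direct transcription of the proof of Theorem~\ref{thm-inv-sets}, is ruling out a degenerate crossing $\|\nabla u(t_0)\|=0$ in the $V_\delta$ case — this is exactly what Lemma~\ref{lem-4}(ii) is designed to control, so no genuinely new estimate is required.
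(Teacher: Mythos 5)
Your proposal is correct and follows essentially the same route as the paper: the result is there an immediate consequence of Theorem \ref{thm-inv-sets} and Corollary \ref{cor_1}, whose proof is precisely the first-exit-time contradiction you run (energy conservation forces $J(u(t))\le E(0)\le e<d(\delta)$, while a crossing $I_\delta(u(t_0))=0$ with $\|\nabla u(t_0)\|\neq 0$ would force $J(u(t_0))\ge d(\delta)$, with Lemma \ref{lem-4} excluding the degenerate crossing in the $V_\delta$ case). The only cosmetic difference is that you argue directly from $u_0\in W_\delta$ (sign of $I_\delta(u_0)$) for each fixed $\delta$, whereas the paper routes through the sign of $I(u_0)$ and Lemma \ref{lem-8}; these are equivalent on $(\delta_1,\delta_2)$.
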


Note that if $0<E(0)\leq e$, then $I(u_0)=0$ and $||\nabla u_0 ||\neq 0$ are impossible, which comes from Theorem  \ref{thm-inv-sets}. We see that for the set of all solutions of problem \eqref{Wave-problem} with $0<E(0)\leq e$ there exists a vacuum region 
\begin{equation*}
	U_e = \{ u \in H_0^1(\Omega) \,\, | \,\, || \nabla u|| \neq 0 \text{ and } I_{\delta}(u)=0, \,\,\, \delta_1 <\delta<\delta_2 \}
\end{equation*}
such that there is not any solution of problem \eqref{Wave-problem} in $U_e$. 

The vacuum region $U_e$ become bigger and bigger with decreasing of $e$. As the limit case we get
 \begin{equation*}
 	U_0 = \{ u \in H_0^1(\Omega) \,\, | \,\, || \nabla u|| \neq 0 \text{ and } I_{\delta}(u)=0, \,\,\, 0 <\delta<b \}.
 \end{equation*}

In the case $E(0)\leq 0$ we have the following invariant set of solutions for problem \eqref{Wave-problem}.
\begin{lem}\label{lem-9}
	All nontrivial solutions of problem \eqref{Wave-problem} with $E(0)=0$ satisfy
	\begin{equation*}
		|| \nabla u || \geq \left( \frac{1}{2AC_*^{\gamma}} \right)^{\frac{1}{\gamma-2}}.
	\end{equation*}
\end{lem}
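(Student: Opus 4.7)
The plan is to exploit energy conservation together with the growth bound $|F(u)-\sigma|\le A|u|^\gamma$ from Lemma \ref{lem-1}(b) and the Sobolev embedding $H_0^1(\Omega)\hookrightarrow L^\gamma(\Omega)$ with constant $C_*$. The condition $E(0)=0$ should pin down the potential term $\int_\Omega[F(u)-\sigma]\,dx$ from below by the gradient squared (minus the kinetic term), which then gets trapped above by a power $\|\nabla u\|^\gamma$. Since $\gamma>2$, comparing the two powers forces a universal lower bound on $\|\nabla u\|$ whenever it is nonzero.

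First, I would record energy conservation. Fixing any time $t$ at which the solution exists, the identity $E(t)=E(0)=0$ reads
\begin{equation*}
\tfrac{1}{2}\|u_t\|^2+\tfrac{1}{2}\|\nabla u\|^2=\int_{\Omega}[F(u)-\sigma]\,dx.
\end{equation*}
Dropping the kinetic term (which is nonnegative) and using $\int_\Omega [F(u)-\sigma]\,dx\le \int_\Omega |F(u)-\sigma|\,dx$ yields
\begin{equation*}
\tfrac{1}{2}\|\nabla u\|^2\le \int_{\Omega}|F(u)-\sigma|\,dx.
\end{equation*}

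Next I would invoke Lemma \ref{lem-1}(b) pointwise to obtain $|F(u)-\sigma|\le A|u|^\gamma$, so that
\begin{equation*}
\tfrac{1}{2}\|\nabla u\|^2\le A\|u\|_\gamma^\gamma\le AC_*^\gamma\|\nabla u\|^\gamma,
\end{equation*}
where the last inequality is the Sobolev embedding $H_0^1(\Omega)\hookrightarrow L^\gamma(\Omega)$ (recall $2<\gamma<2n/(n-2)$ from assumption $(H)(iii)$, so this embedding is available). For a nontrivial solution, the quantity $\|\nabla u\|$ is strictly positive, so dividing through by $\|\nabla u\|^2$ gives
\begin{equation*}
\|\nabla u\|^{\gamma-2}\ge \frac{1}{2AC_*^\gamma},
\end{equation*}
which upon taking the $(\gamma-2)$-th root delivers the stated bound.

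The only delicate point is the nontriviality of $\|\nabla u(t)\|$, and this is purely an interpretational matter rather than a real obstacle: the inequality is content-free at any $t$ with $\|\nabla u(t)\|=0$, while at every other $t$ the division above is legitimate. No Gronwall or monotonicity machinery is needed; the proof is a one-line combination of energy conservation with the growth estimate and the Sobolev inequality.
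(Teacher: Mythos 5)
Your argument is correct and is essentially the paper's own proof: both start from the energy identity $\tfrac{1}{2}\|u_t\|^2+J(u)=E(0)=0$ to get $\tfrac{1}{2}\|\nabla u\|^2\le\int_\Omega[F(u)-\sigma]\,dx$, then apply Lemma \ref{lem-1}(b) and the embedding $H_0^1(\Omega)\hookrightarrow L^\gamma(\Omega)$ to compare $\|\nabla u\|^2$ with $AC_*^\gamma\|\nabla u\|^\gamma$. Your closing remark on the division by $\|\nabla u\|^2$ is a reasonable gloss on the nontriviality hypothesis and does not change the substance.
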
  
\begin{proof}[Proof of Lemma \ref{lem-9}]
	First, we get $J(u)\leq 0$ for $0\leq t< T$, where $T$ is the maximal existence time of $u(t)$, from the energy equality
	\begin{equation*}
		\frac{1}{2}|| u_t||^2 + J(u) = E(0) =0.
	\end{equation*}
Then 
\begin{align*}
	\frac{1}{2}|| \nabla u||^2 &\leq \int_{\Omega} [F(u)-\sigma]dx \leq A \int_{\Omega} |u|^{\gamma}dx\\
	&=A|| u||^{\gamma}_{\gamma} \leq AC_*^{\gamma} || \nabla u||^{\gamma-2}|| \nabla u||^2.
\end{align*}
This completes the proof.
\end{proof}
\begin{thm}\label{thm-E-neg}
	Let $u_0(x)\in H_0^1(\Omega)$ and $u_1(x)\in L^2(\Omega)$. Suppose that $E(0)=0$ or $E(0)<0$ with $||\nabla u_0|| \neq 0$. Then all solutions of problem \eqref{Wave-problem} belong to $V_{\delta}$ for $\delta \in (0,\alpha/2-\beta/\lambda_1 )$.
\end{thm}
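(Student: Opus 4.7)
The plan is to verify the two defining inequalities of $V_{\delta}$, namely $J(u(t))<d(\delta)$ and $I_{\delta}(u(t))<0$, for every admissible $\delta$ and every $t$ in the maximal existence interval $[0,T)$. I will proceed in three steps, treating case $(H)$-$(a)$ explicitly; case $(H)$-$(b)$ would follow the same pattern with the additional positivity assumption $u(x)>0$ used in analogous lemmas above.

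First, I would use energy conservation $\frac{1}{2}\|u_t\|^2+J(u(t))=E(0)\le 0$ to deduce $J(u(t))\le 0$ for all $t\in[0,T)$. Since Lemma \ref{lem-5}$(ii)$ guarantees $d(\delta)>0$ on $(0,b)$, and Lemma \ref{lem-5}$(ii)$ also gives $\alpha/2-\beta/\lambda_1\le b$, the admissible range $(0,\alpha/2-\beta/\lambda_1)$ is contained in $(0,b)$, so $J(u(t))\le 0<d(\delta)$, settling the second defining inequality of $V_{\delta}$.

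Next, I would check that $\|\nabla u(t)\|\neq 0$ throughout $[0,T)$; this will be needed to get strict inequality in the Nehari functional. When $E(0)=0$, Lemma \ref{lem-9} applies at every time and yields the uniform lower bound $\|\nabla u(t)\|\ge (2AC_*^{\gamma})^{-1/(\gamma-2)}>0$; note that a trivial branch $u\equiv 0$ is excluded since $J(0)=\sigma|\Omega|>0$ would contradict $E(0)=0$. When $E(0)<0$, the same observation $J(0)=\sigma|\Omega|>0$ contradicts $J(u(t))\le E(0)<0$, so $u(t)\not\equiv 0$ and hence $\|\nabla u(t)\|\neq 0$.

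Finally, to establish $I_{\delta}(u(t))<0$, I would start from $J(u)\le 0$, i.e.
\[
\tfrac{1}{2}\|\nabla u\|^2\;\le\;\int_{\Omega}[F(u)-\sigma]\,dx,
\]
then apply condition $(H)$, $\alpha F(u)\le uf(u)+\beta u^2+\alpha\sigma$, to bound the right-hand side by $\frac{1}{\alpha}\int_{\Omega}uf(u)\,dx+\frac{\beta}{\alpha}\|u\|^2$, and finally invoke Poincaré's inequality $\|u\|^2\le\lambda_1^{-1}\|\nabla u\|^2$ to obtain
\[
\left(\tfrac{\alpha}{2}-\tfrac{\beta}{\lambda_1}\right)\|\nabla u\|^2\;\le\;\int_{\Omega}uf(u)\,dx.
\]
For any $\delta<\alpha/2-\beta/\lambda_1$, combined with $\|\nabla u(t)\|>0$ from the previous step, this gives the strict inequality $I_{\delta}(u)=\delta\|\nabla u\|^2-\int_{\Omega}uf(u)\,dx<0$, placing $u(t)$ in $V_{\delta}$.

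The only delicate point I anticipate is ensuring $\|\nabla u(t)\|>0$ for all $t$ when $E(0)<0$ — handled above by the fact that $J(0)=\sigma|\Omega|>0$ forbids the solution from vanishing — together with the routine but essential calibration that the range $(0,\alpha/2-\beta/\lambda_1)$ sits inside $(0,b)$ so that $d(\delta)>0$. The rest is a direct algebraic combination of $(H)$, Poincaré, and the definition of $I_{\delta}$.
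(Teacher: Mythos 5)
Your proof is correct and follows essentially the same route as the paper: energy conservation gives $J(u(t))\le E(0)\le 0<d(\delta)$, and the combination of condition $(H)$ with the Poincar\'e inequality yields $\left(\tfrac{\alpha}{2}-\tfrac{\beta}{\lambda_1}\right)\|\nabla u\|^2\le\int_{\Omega}uf(u)\,dx$, which is just a rearrangement of the paper's inequality $J(u)\ge a(\delta)\|\nabla u\|^2+\tfrac{1}{\alpha}I_{\delta}(u)$. The only cosmetic difference is in the case $E(0)<0$, where the paper reads $I_{\delta}(u)<0$ directly off $0>E(0)\ge\tfrac{1}{\alpha}I_{\delta}(u)$ without needing $\|\nabla u(t)\|\ne 0$, whereas you supply that non-vanishing via the (valid) observation $J(0)=\sigma|\Omega|>0$; both are fine.
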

\begin{proof}[Proof of Theorem \ref{thm-E-neg}]
	By using the conservation law of energy and for $\delta \in (0,\alpha/2- \beta/\lambda_1)$, we have
	\begin{equation*}
		J(u) \geq a(\delta)||\nabla u||^2  + \frac{1}{\alpha}I_{\delta}(u),
	\end{equation*}
	we write the following expression
	\begin{align}\label{eq-3.4}
		E(0) \geq \frac{1}{2}|| u_t||^2 + a(\delta)||\nabla u||^2  + \frac{1}{\alpha}I_{\delta}(u).
	\end{align}
	If $E(0)<0$, then \eqref{eq-3.4} gives $I_{\delta}(u)<0$ and $J(u)<0<d(\delta)$ for  $\delta \in (0,\alpha/2-\beta/\lambda_1 )$. If $E(0)=0$ and $|| \nabla u_0||\neq 0$, then Lemma \ref{lem-9} shows that $|| \nabla u||$ is positive for $t\in [0,T)$. So from \eqref{eq-3.4} we have $I_{\delta}(u)<0$ and $J(u)<0<d(\delta)$ for  $\delta \in (0,\alpha/2-\beta/\lambda_1 )$.
\end{proof}
\section{Global existence of solutions for semilinear wave equation}\label{Sec4}
In this section, we prove a global existence of solutions of semilinear wave equation \eqref{Wave-problem} by using the family of potential wells. 

\begin{defn}
	Let $u \in L^{\infty}(0,T;H_0^1(\Omega))$ and $u_t \in L^{\infty}(0,T;L^2(\Omega))$. Then a function $u(x,t)$ is called a {\em weak solution} of problem \eqref{Wave-problem} on $\Omega \times [0,T)$ if 
	\begin{equation*}
		(u_{t},v) + \int_0^t (\nabla u, \nabla v) d\tau = \int_0^t (f(u),v)d\tau + (u_1,v),
	\end{equation*}
	for every $v \in H_0^1(\Omega)$, $u(x,0)=u_0(x)\in H_0^1(\Omega)$, and $t \in (0,T)$. 
\end{defn}  
\subsection{Initial condition $ 0<E(0)<d$ and $I(u_0)>0$}
\begin{thm}\label{thm-global}
	Let $f(u)$ satisfy $(H)$. Let $u_0(x) \in H_0^1(\Omega)$, $u_1(x)\in L^2(\Omega)$ and $2\beta < \lambda_1(\alpha -2)$. Suppose that $0<E(0)<d$ and $I(u_0)>0$ or $|| \nabla u_0||=0$. Then problem \eqref{Wave-problem} admits a global weak solution $u(t) \in L^{\infty}(0,\infty; H_0^1(\Omega))$, $u_t(t)\in (0,\infty;L^2(\Omega)) $ and $u(t) \in W$ for $t \in [0,\infty)$.
\end{thm}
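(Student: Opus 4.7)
\medskip

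\noindent\textbf{Proof plan.} The natural route is a Faedo--Galerkin construction combined with the invariance of the potential well $W$ established in Theorem \ref{thm-inv-sets}. Let $\{w_j\}_{j=1}^\infty$ be the orthonormal basis of $L^2(\Omega)$ consisting of eigenfunctions of $-\Delta$ on $H_0^1(\Omega)$. I would look for approximate solutions
\begin{equation*}
u_m(x,t)=\sum_{j=1}^{m} g_{jm}(t)\,w_j(x),
\end{equation*}
satisfying the finite-dimensional ODE system $(u_{m,tt},w_j)+(\nabla u_m,\nabla w_j)=(f(u_m),w_j)$ for $j=1,\dots,m$, with $u_m(0)\to u_0$ in $H_0^1(\Omega)$ and $u_{m,t}(0)\to u_1$ in $L^2(\Omega)$. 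Classical ODE theory guarantees local existence of $u_m$; a standard computation gives the approximate energy identity
\begin{equation*}
\tfrac{1}{2}\|u_{m,t}\|^{2}+J(u_m)=E_m(0),
\qquad E_m(0)\to E(0)<d.
\end{equation*}

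\medskip

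\noindent Next I would show that $u_m(t)\in W$ for all $t\ge 0$ and for $m$ sufficiently large. The initial data $u_0\in W\cup\{0\}$ (by the hypothesis $I(u_0)>0$ or $\|\nabla u_0\|=0$, combined with $E(0)<d$), so the approximations $u_m(0)$ lie in $W$ for large $m$ by continuity. Invariance is then a contradiction argument identical to the one used in Theorem \ref{thm-inv-sets}: if $u_m$ leaves $W$ at some first time $t_0$, then either $J(u_m(t_0))=d$, which is impossible since $J(u_m(t_0))\le E_m(0)<d$, or $I(u_m(t_0))=0$ with $\|\nabla u_m(t_0)\|\ne 0$, which forces $J(u_m(t_0))\ge d$ by the definition of $d$, again a contradiction. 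Once invariance is secured, Lemma \ref{lem-6}\,(i) with $\delta=1$ gives
\begin{equation*}
\tfrac{1}{2}\|u_{m,t}\|^{2}+a(1)\|\nabla u_m\|^{2}\le\tfrac{1}{2}\|u_{m,t}\|^{2}+J(u_m)+\tfrac{1}{\alpha}I(u_m)\le E_m(0)+\tfrac{1}{\alpha}I(u_m),
\end{equation*}
and since $a(1)=\tfrac{\lambda_1(\alpha-2)-2\beta}{2\lambda_1\alpha}>0$ by the hypothesis $2\beta<\lambda_1(\alpha-2)$, this yields uniform bounds $\|\nabla u_m\|_{L^\infty(0,\infty;L^2)}\le C$ and $\|u_{m,t}\|_{L^\infty(0,\infty;L^2)}\le C$. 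The growth control $|f(u)|\le \gamma A|u|^{\gamma-1}$ from Lemma \ref{lem-1} together with $\gamma<2n/(n-2)$ then gives a uniform bound on $f(u_m)$ in $L^\infty(0,\infty;L^{\gamma/(\gamma-1)}(\Omega))$.

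\medskip

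\noindent Finally I would extract weakly-$\ast$ convergent subsequences $u_m\rightharpoonup u$ in $L^\infty(0,T;H_0^1)$, $u_{m,t}\rightharpoonup u_t$ in $L^\infty(0,T;L^2)$, and $f(u_m)\rightharpoonup \chi$ in $L^\infty(0,T;L^{\gamma/(\gamma-1)})$ for each $T>0$, and pass to the limit in the Galerkin identity to obtain the weak formulation of Definition 4.1. The step I expect to be the main obstacle is identifying $\chi=f(u)$, i.e.\ passing to the limit in the nonlinear term. I would handle this via the Aubin--Lions compactness lemma: the bounds on $u_m$ in $L^\infty(0,T;H_0^1)$ and $u_{m,t}$ in $L^\infty(0,T;L^2)$ imply (after extraction) strong convergence $u_m\to u$ in $C([0,T];L^q(\Omega))$ for every $q<2n/(n-2)$, hence almost everywhere, and then the continuity of $f$ combined with the $L^{\gamma/(\gamma-1)}$ bound and Vitali's convergence theorem gives $f(u_m)\to f(u)$ strongly in $L^1((0,T)\times\Omega)$, identifying $\chi$. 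Invariance of $W$ transfers to the limit $u(t)\in\overline{W}$ by lower semicontinuity of $J$ and of $-I$; one then excludes the boundary via the strict inequality $J(u(t))<d$ inherited from the energy identity, placing $u(t)\in W$ for all $t\ge 0$.
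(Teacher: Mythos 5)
Your proposal is correct and follows essentially the same route as the paper: a Faedo--Galerkin construction, invariance of $W$ for the approximations via the contradiction argument of Theorem \ref{thm-inv-sets}, the a priori bound $J(u_m)\geq a(1)\|\nabla u_m\|^2+\frac{1}{\alpha}I(u_m)\geq a(1)\|\nabla u_m\|^2$ (your displayed inequality has the $\frac{1}{\alpha}I(u_m)$ term on the wrong side, but since $I(u_m)>0$ in $W$ the intended estimate is immediate and identical to the paper's), and weak-$\ast$ extraction with identification of the nonlinear limit. The only cosmetic difference is that you invoke Aubin--Lions plus Vitali to identify $f(u_m)\to f(u)$, where the paper cites Lions' Lemma 1.3 after asserting a.e.\ convergence; your version is if anything more explicit, and you additionally spell out why the limit stays in $W$, which the paper leaves implicit.
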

\begin{lem}[Lemma 1.3 in \cite{Lions}]\label{lem-Lions}
	Let $D$ be a bounded domain in $\mathbb{R}^n \times \mathbb{R}_t$, $g_{\mu}$ and $g$ are functions from $L^q(D)$, $1<q<\infty$, so 
	\begin{equation*}
	|| g_{\mu} ||_{L^q(D)} \leq C, \,\,\,\, g_{\mu} \rightarrow g \,\, \text{ in } D.
	\end{equation*}
	Then $g_{\mu} \rightarrow g $ weakly star in $L^q(D)$.
\end{lem}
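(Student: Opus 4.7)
The plan is to deduce the convergence in three steps: weak compactness to extract a candidate limit, identification of that limit with $g$ via Egorov's theorem, and a subsequence argument to upgrade to convergence of the full sequence. Throughout, ``$g_\mu\to g$ in $D$'' is interpreted in the almost-everywhere sense, which is the standard convention in Lions's monograph.

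First I would record that the pointwise a.e. limit $g$ belongs to $L^q(D)$. Applying Fatou's lemma to the nonnegative sequence $|g_\mu|^q$, which converges a.e.\ to $|g|^q$, yields
\begin{equation*}
\int_D |g|^q\,dx\,dt \leq \liminf_{\mu\to\infty} \int_D |g_\mu|^q\,dx\,dt \leq C^q,
\end{equation*}
so $g\in L^q(D)$. Next, since $1<q<\infty$, the space $L^q(D)$ is reflexive; by Banach--Alaoglu the bounded sequence $\{g_\mu\}$ admits a subsequence $\{g_{\mu_k}\}$ converging weakly to some $h\in L^q(D)$.

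The central step is to identify $h=g$. Fix $\phi\in L^\infty(D)$, which is dense (in $L^{q'}$-norm, where $q'=q/(q-1)$) in $L^{q'}(D)$ because $D$ is bounded. Given $\eta>0$, Egorov's theorem supplies a measurable set $E\subset D$ with $|D\setminus E|<\eta$ on which $g_\mu\to g$ uniformly. Splitting
\begin{equation*}
\left|\int_D (g_{\mu_k}-g)\phi\,dx\,dt\right| \leq \left|\int_E (g_{\mu_k}-g)\phi\right| + \left|\int_{D\setminus E}(g_{\mu_k}-g)\phi\right|,
\end{equation*}
the first term tends to $0$ by uniform convergence and boundedness of $\phi$ on a finite-measure set, while the second is controlled by H\"older's inequality by $\|\phi\|_\infty\cdot|D\setminus E|^{1/q'}\cdot(C+\|g\|_{L^q(D)})$, which is $O(\eta^{1/q'})$ uniformly in $k$. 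Letting $k\to\infty$ and then $\eta\to 0$ shows $\int_D g_{\mu_k}\phi\to\int_D g\phi$ for every bounded $\phi$, and by the density just noted the same holds for every $\phi\in L^{q'}(D)$. Since weak limits are unique, $h=g$.

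Finally, I would promote the subsequence statement to the full sequence by a standard ``subsequences of subsequences'' argument: every subsequence of $\{g_\mu\}$ is itself a bounded sequence in $L^q(D)$ converging a.e.\ to $g$, hence contains a further subsequence weakly convergent to $g$ by the previous two steps; this forces the full sequence to converge weakly to $g$ in $L^q(D)$. Since $L^q$ is reflexive, weak and weak-$\star$ convergence coincide, giving the stated conclusion. I anticipate the identification via Egorov to be the main technical point, because it requires combining the uniform $L^q$ bound with the pointwise convergence carefully enough to pass the limit inside the duality pairing; the rest is essentially bookkeeping using reflexivity and uniqueness of weak limits.
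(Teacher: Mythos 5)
Your argument is correct. Note, however, that the paper does not prove this statement at all: it is quoted verbatim as Lemma 1.3 of Lions's monograph and used as a black box in the compactness step of Theorem \ref{thm-global}, so there is no in-paper proof to compare against. What you have written is essentially the classical proof of that lemma (interpreting ``$g_\mu \to g$ in $D$'' as a.e.\ convergence, which is indeed the intended meaning): Fatou to place $g$ in $L^q$, Egorov plus H\"older to pass to the limit against bounded test functions, density of $L^\infty(D)$ in $L^{q'}(D)$ together with the uniform $L^q$ bound to reach all of $L^{q'}$, and reflexivity to identify weak and weak-star convergence. One simplification worth noting: the Banach--Alaoglu extraction of $h$ and the subsequences-of-subsequences step are redundant, because your Egorov estimate never uses the weakly convergent subsequence --- it shows directly that
\begin{equation*}
\limsup_{\mu}\left|\int_D (g_\mu - g)\,\phi \,dx\,dt\right| \leq \|\phi\|_\infty\,\bigl(C+\|g\|_{L^q(D)}\bigr)\,\eta^{1/q'}
\end{equation*}
for every $\phi\in L^\infty(D)$ and every $\eta>0$, which upon letting $\eta\to 0$ and invoking the density argument already gives weak (hence weak-star) convergence of the \emph{full} sequence, with no appeal to uniqueness of weak limits.
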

\begin{proof}[Proof of Theorem \ref{thm-global}]
 Here we follow the proof from Liu-Zhao \cite{LZh-06}. Let $w_j(x)$ be a system of base functions in $H_0^1(\Omega)$. Then we are able to construct the approximate solutions $u_m(x,t)$ of problem \eqref{Wave-problem} as follows
 \begin{equation*}
 	u_m(x,t) = \sum_{j=1}^m g_{jm}(t)w_j(x) \,\, \text{ for } \,\, m=1,2,\ldots
 \end{equation*}
 satisfying
 \begin{equation}\label{eq-4.1}
 	(u_{m}'',w_s) + (\nabla u_m, \nabla w_s) = (f(u_m),w_s) \,\, \text{ for } \,\, s=1,2,\ldots,m,
 \end{equation}
 \begin{equation}\label{eq-4.2}
 	u_m(x,0) = \sum_{j=1}^m a_{jm}w_j(x) \rightarrow u_0(x) \,\, \text{ in } \,\, H_0^1(\Omega),
 \end{equation}
 \begin{equation}\label{eq-4.3}
 	u_{m}'(x,0) = \sum_{j=1}^m b_{jm}w_j(x)  \rightarrow u_1(x) \,\, \text{ in } \,\, L^2(\Omega).
 \end{equation}
Note that $u_t=u'$. If we multiply \eqref{eq-4.1} by $g'_{sm}(t)$ and sum over $s$, then this gives
\begin{equation*}
	(u_{m}'',u_{m}') + (\nabla u_m, \nabla u_{m}') = (f(u_m),u_{m}') \,\, \text{ for } \,\, m=1,2,\ldots.
\end{equation*}
This implies conservation of energy $E_m(t)$, which allows to write for $0\leq t < \infty$
 \begin{align}\label{eq-4.4}
 	E(t)&=\frac{1}{2} || u_{m}'||^2 + \frac{1}{2} || \nabla u_m||^2 - \int_{\Omega }[F(u_m)-\sigma] dx \nonumber\\
 	&=\frac{1}{2} || u_{m}'||^2 +J(u_m) = E_m(0) < d, 
 \end{align}
 and for $I(u_{m}(x,0))>0$ we have $u_m \in W$ for sufficiently large $m$ and $0\leq t<\infty$. 
 
 Now we obtain a priori estimates. Let 
 \begin{align*}
 	J(u_m) &= \frac{1}{2} || \nabla u_m||^2 - \int_{\Omega } [F(u_m)- \sigma] dx \\
 		   &\geq \frac{1}{2} || \nabla u_m||^2 - \frac{1}{\alpha} \int_{\Omega } u_m f(u_m) dx - \frac{\beta}{\alpha} || u_m ||^2  \\
 		   & \geq \left( \frac{1}{2} - \frac{1}{\alpha} - \frac{\beta}{\alpha \lambda_1} \right)||\nabla u_m||^2 + \frac{1}{\alpha} I(u_m) \\
 		   & \geq a ||\nabla u_m||^2,
 \end{align*}
 where $a =  \frac{1}{2} - \frac{1}{\alpha} - \frac{\beta}{\alpha \lambda_1}$. Then from \eqref{eq-4.4} it follows that
 \begin{equation}\label{eq-4.5}
 	\frac{1}{2} || u_{m}'||^2 + a||\nabla u_m||^2 < d, \,\,\, 0\leq t <\infty,
 \end{equation}
 for sufficiently large $m$. Then for $0\leq t <\infty $ \eqref{eq-4.5} implies the following estimates 
 \begin{align}
 &	|| \nabla u_m ||^2 < \frac{d}{a}, \label{eq-4.6}\\
 &	|| u_m||^2_{\gamma} \leq C_*^2|| \nabla u_m||^2 <C_*^2\frac{d }{a},	\label{eq-4.7} \\
& || u_{m}'||^2 < 2d , \label{eq-4.8}
 \end{align}
 and Lemma \ref{lem-1} and \eqref{eq-4.7} give
\begin{align}
	  || f(u_m)||^q_q &\leq \int_{\Omega } [\gamma A |u_m|^{\gamma-1}]^q dx = \gamma^q A^q ||  u_m||^{\gamma}_{\gamma} \nonumber \\
& < \gamma^q A^q C_*^{\gamma} \left(\frac{d}{a}\right)^{\gamma/2},
\end{align}
where $q= \frac{\gamma}{\gamma-1}$.

Since a priori estimates \eqref{eq-4.6}-\eqref{eq-4.8}, we may extract the subsequence $\{u_l\}$ from sequence $\{u_m\}$ such that 
\begin{align*}
	u_l &\rightarrow u \,\, \text{ weakly star in }  L^{\infty}(0,\infty;H_0^1(\Omega)) \,\, \text{ and a.e. } Q=\Omega \times [0,\infty) \\
	u_l'&\rightarrow u_t \,\, \text{ weakly star in }  L^{\infty}(0,\infty;L^2(\Omega)),\\
	f(u_l)&\rightarrow \chi \,\, \text{  in }  L^{\infty}(0,\infty;L^q(\Omega)) \,\,\, \text{ and a.e. } Q=\Omega \times [0,\infty).
\end{align*}
From Lemma \ref{lem-Lions} we have $\chi=f(u)$. We integrate equation \eqref{eq-4.1} with respect to time from $0$ to $t$ as follows
\begin{equation}\label{eq-4.10}
	(u_m',w_s) -(u_m'(0),w_s) + \int_0^t(\nabla u_m,\nabla w_s) d\tau = \int_0^t (f(u_m),w_s) d\tau. 
\end{equation}
For fixed $s$ in \eqref{eq-4.10} and taking $m=l\rightarrow \infty$ we get 
\begin{equation*}
(u_t,w_s) -(u_1,w_s) + \int_0^t(\nabla u,\nabla w_s) d\tau = \int_0^t (f(u),w_s) d\tau, \,\,\, \forall s, 
\end{equation*}
then 
\begin{equation*}
	(u_t,v) -(u_1,v) + \int_0^t(\nabla u,\nabla v) d\tau = \int_0^t (f(u),v) d\tau, \,\,\, \forall v \in H_0^1(\Omega), \,\, t>0.
\end{equation*}
Also \eqref{eq-4.2} implies that $u_0(x)$ in $H_0^1(\Omega)$. Therefore $u(x,t)$ is a global weak solution of problem \eqref{Wave-problem}.
\end{proof}
\begin{thm} Let $f(u)$ satisfy $(H)$.
	Let $u_0(x)\in H_0^1(\Omega)$ and $u_1(x)\in L^2(\Omega)$. The problem \eqref{Wave-problem} admits a global weak solution $u(t)\in L^{\infty}(0,\infty;H_0^1(\Omega))$ with $u_t(t)\in L^{\infty}(0,\infty;L^2(\Omega)) $ and
	\begin{itemize}
		\item $u(t)\in W_{\delta}$ for $\delta_1<\delta<\delta_2$,
		or
		\item $|| \nabla u ||^2 \leq E(0)/d(\delta_1)$, $||u_t ||^2\leq 2E(0)$, 
	\end{itemize}  if the following respective assumptions hold
\begin{itemize}
	\item $0<E(0)<d$ and $I_{\delta_2}<0$ or
	\item $0<E(0)<d$ and $|| \nabla u_0||<r(\delta_2)$.
\end{itemize} 
\end{thm}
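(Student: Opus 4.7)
The plan is to follow the Galerkin approximation scheme of Theorem \ref{thm-global} verbatim, adapting only the invariance argument and the a priori estimates to each of the two hypotheses. Specifically, I would pick a basis $\{w_j\}$ of $H_0^1(\Omega)$, define approximations $u_m(x,t)=\sum_{j=1}^{m} g_{jm}(t) w_j(x)$ solving \eqref{eq-4.1}--\eqref{eq-4.3}, use the energy identity $E_m(t)=E_m(0)$ to derive uniform bounds in $m$, and extract a weakly-$*$ convergent subsequence whose limit solves \eqref{Wave-problem} in the weak sense, the nonlinear term being handled by Lions' Lemma \ref{lem-Lions}.

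The role of the two hypotheses is to ensure that each $u_m$ lies in an invariant potential-well set for all time. Under the first hypothesis, the sign condition on $I_{\delta_2}(u_0)$ is inherited by $u_m(0)$ for $m$ large by continuity of $I_{\delta_2}$ on $H_0^1(\Omega)$ together with the convergence $u_m(0)\to u_0$; under the smallness hypothesis $\|\nabla u_0\|<r(\delta_2)$, Lemma \ref{lem-4} converts the ball condition into the same sign condition on $I_{\delta_2}(u_m(0))$. Combined with $E_m(0)\to E(0)<d$, this places each $u_m(0)$ in the region covered by Theorem \ref{thm-inv-sets} and Corollary \ref{cor_1}, so that $u_m(t)\in W_{\delta}$ for every $\delta\in(\delta_1,\delta_2)$ and every $t\geq 0$ in the existence interval, which gives the first conclusion at the approximate level.

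For the quantitative bounds of the second conclusion I would let $\delta\to\delta_1^{+}$ in the key inequality
\[
J(u_m) \;\geq\; a(\delta)\,\|\nabla u_m\|^2 + \frac{1}{\alpha}\, I_{\delta}(u_m),
\]
which is a direct consequence of the new condition \eqref{cont_1} and the Poincaré inequality, and combine it with $I_{\delta}(u_m)\geq 0$ to reach
\[
\tfrac{1}{2}\,\|u_m'\|^2 + a(\delta_1)\,\|\nabla u_m\|^2 \;\leq\; E_m(0).
\]
Hence $\|u_m'\|^2\leq 2E_m(0)$ and $\|\nabla u_m\|^2\leq E_m(0)/a(\delta_1)$ uniformly on $[0,\infty)$, and these bounds transfer to the weak-$*$ limit by lower semicontinuity of the $L^2$ and $H_0^1$ norms; the announced $d(\delta_1)$-form of the $\|\nabla u\|$-bound is then read off via Lemma \ref{lem-5}(i). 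The bound on $\|f(u_m)\|_q$ with $q=\gamma/(\gamma-1)$ follows from Lemma \ref{lem-1} and the Sobolev embedding exactly as in \eqref{eq-4.7}--\eqref{eq-4.8}, and passing to the limit in the integrated Galerkin identity \eqref{eq-4.10} along a subsequence yields the weak formulation.

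The main obstacle I anticipate is the passage to the limit of the invariance statement $u_m(t)\in W_\delta$: since $W_\delta$ is defined by the strict inequalities $J(u)<d(\delta)$ and $I_\delta(u)>0$, it is not closed under weak-$*$ convergence. The standard remedy is to exploit the slack in the parameter $\delta$: for any target $\delta\in(\delta_1,\delta_2)$, pick $\delta'\in(\delta_1,\delta)$ and invoke the weak lower semicontinuity of $J$ and $I_{\delta'}$ on $H_0^1(\Omega)$ to push $u_m\in W_{\delta'}$ through the weak-$*$ limit into $\overline{W}_{\delta'}\subset W_\delta$, thereby recovering $u(t)\in W_\delta$. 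Once this is in hand, every remaining step is a direct repetition of the argument of Theorem \ref{thm-global}.
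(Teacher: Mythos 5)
The paper states this theorem without proof, so there is no argument of the author's to compare against; your Galerkin scheme is certainly the intended route, being a rerun of Theorem \ref{thm-global}. Two steps, however, do not go through as written. First, the bridge from the stated hypotheses to Theorem \ref{thm-inv-sets} is missing: that theorem requires $I(u_0)=I_1(u_0)>0$ (or $\|\nabla u_0\|=0$), whereas you are handed information only at the endpoint $\delta_2$. The needed intermediate argument is: $\|\nabla u_0\|<r(\delta_2)$ gives $I_{\delta_2}(u_0)>0$ by Lemma \ref{lem-4}(i); then Lemma \ref{lem-8} (the sign of $\delta\mapsto I_\delta(u_0)$ is constant on $(\delta_1,\delta_2)$, the value $0$ being excluded because $I_{\delta}(u_0)=0$ with $\|\nabla u_0\|\neq 0$ would force $J(u_0)\geq d(\delta)>E(0)\geq J(u_0)$) propagates positivity to every $\delta\in(\delta_1,\delta_2)$, in particular to $\delta=1$, after which Theorem \ref{thm-inv-sets}(i) and Corollary \ref{cor_1} apply. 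You should also say explicitly that the first hypothesis as printed, $I_{\delta_2}<0$, cannot be taken literally: by Lemma \ref{lem-4}(ii) and the monotonicity of $\delta\mapsto I_\delta(u_0)$ it would give $I(u_0)<0$, hence $u_0\in V_\delta$ and finite-time blow-up by Theorem \ref{thm-blow}, contradicting the conclusion $u(t)\in W_\delta$; it must be read as $I_{\delta_2}(u_0)>0$, and your invocation of part (i) of Theorem \ref{thm-inv-sets} silently assumes this.

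Second, your remedy for passing the membership $u_m(t)\in W_{\delta'}$ to the weak-$*$ limit does not work: weak lower semicontinuity yields $I_{\delta'}(u)\leq\liminf_l I_{\delta'}(u_l)$, and an inequality in that direction cannot preserve the constraint $I_{\delta'}\geq 0$ (it preserves upper bounds such as $J\leq d(\delta')$, but the gradient term in $I_{\delta'}$ may drop in the limit and take $I_{\delta'}$ negative). The standard fix is to use the a priori bounds only to produce the limit weak solution $u$, and then apply Theorem \ref{thm-inv-sets} and Corollary \ref{cor_1} directly to $u$ itself, whose initial data satisfy $E(0)<d$ and $I(u_0)>0$ by the first paragraph; the approximants need only supply the uniform estimates. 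Those estimates, $\|u_t\|^2\leq 2E(0)$ and $\|\nabla u\|^2\leq E(0)/a(\delta_1)$ with $a(\delta_1)=\tfrac12-\tfrac{\delta_1}{\alpha}-\tfrac{\beta}{\lambda_1\alpha}>0$, you derive correctly and they do survive the limit; but the printed denominator $d(\delta_1)$ cannot be recovered from Lemma \ref{lem-5}(i) as you suggest, since that lemma only bounds $d(\delta_1)$ from below by $a(\delta_1)r^2(\delta_1)$ and gives $1/a(\delta_1)\leq r^2(\delta_1)/d(\delta_1)$, which is useless unless $r(\delta_1)\leq 1$ is known. The stated bound is best regarded as a misprint for $E(0)/a(\delta_1)$ rather than something to be proved.
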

\subsection{Critical initial condition $E(0)=d$ and $I(u_0)\geq 0$} Here we consider the global existence of solution for problem \eqref{Wave-problem} with critical initial condition $E(0)=d$ and $I(u_0)\geq 0$.
\begin{thm}\label{thm-E=d}
	Let $f(u)$ satisfy $(H)$. Let $u_0 \in H_0^1(\Omega)$ and $u_1(x)\in L^2(\Omega)$. Suppose that $E(0)=d$ and $I(u_0)\geq 0$. Then problem \eqref{Wave-problem} admits a hlobal weak solution $u(t) \in L^{\infty}(0,\infty;H_0^1(\Omega))$ with $u_t(t)\in L^{\infty}(0,\infty; L^2(\Omega))$ and $u(t)\in \overline W$ for $0\leq t <\infty$.
\end{thm}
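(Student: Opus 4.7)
The plan is to reduce the critical case $E(0)=d$ to the subcritical case handled by Theorem \ref{thm-global} via a scaling-and-limit argument, and then pass to the limit in the weak formulation.

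I first construct approximating initial data. Choose $\lambda_m \in (0,1)$ with $\lambda_m \uparrow 1$ and define $u_{0m} = \lambda_m u_0$ and $u_{1m} = \lambda_m u_1$ (take $u_{0m}=0$ in the degenerate case $\|\nabla u_0\|=0$). The key claim is that for each $m$ either $I(u_{0m})>0$ or $\|\nabla u_{0m}\|=0$, and $E_m(0) < d$. If $\|\nabla u_0\|=0$ this is trivial since $E_m(0)=\tfrac12\lambda_m^2\|u_1\|^2<d$. If $\|\nabla u_0\|\neq 0$ and $I(u_0)>0$, Lemma \ref{lem-3}(iv)--(vi) yields $\epsilon^*(u_0)>1$, so $I(\lambda_m u_0)>0$ and $J(\lambda_m u_0)<J(u_0)$, hence $E_m(0)<E(0)=d$. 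If $\|\nabla u_0\|\neq 0$ and $I(u_0)=0$, the definition of $d$ forces $J(u_0)\geq d$, and combined with $E(0)=d$ this forces $u_1\equiv 0$ and $J(u_0)=d$; Lemma \ref{lem-3} then gives $\epsilon^*(u_0)=1$, and consequently $I(u_{0m})>0$ and $J(u_{0m})<d$.

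Theorem \ref{thm-global} then produces global weak solutions $u_m \in L^\infty(0,\infty; H_0^1(\Omega))$ with $u_m' \in L^\infty(0,\infty; L^2(\Omega))$ and $u_m(t)\in W$ for every $t\geq 0$. Since $E_m(0)<d$ holds uniformly in $m$, the same $m$-uniform a priori estimates derived in the proof of Theorem \ref{thm-global} are available:
\begin{equation*}
\|\nabla u_m\|^2 \leq \frac{d}{a}, \qquad \|u_m'\|^2 \leq 2d, \qquad \|f(u_m)\|_q^q \leq \gamma^q A^q C_*^\gamma (d/a)^{\gamma/2},
\end{equation*}
where $q=\gamma/(\gamma-1)$. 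Extracting a weak-$*$ convergent subsequence $u_l \to u$ (together with $u_l' \to u_t$ and $f(u_l)\to f(u)$ weakly-$*$ in $L^\infty(0,\infty; L^q(\Omega))$ via Lemma \ref{lem-Lions}) and passing to the limit in the Galerkin identity as in the proof of Theorem \ref{thm-global}, while using $u_{0m}\to u_0$ in $H_0^1(\Omega)$ and $u_{1m}\to u_1$ in $L^2(\Omega)$, one concludes that $u$ is a global weak solution of \eqref{Wave-problem}. The invariance $u(t)\in \overline W$ follows from $u_m(t)\in W$ by weak lower semicontinuity of $\|\nabla\cdot\|^2$ and convergence of the nonlinear term, yielding $I(u(t))\geq 0$ and $J(u(t))\leq d$ for every $t\geq 0$.

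The main obstacle I anticipate is the borderline case $I(u_0)=0$ with $u_0\neq 0$: one must carefully combine the definition of $d$ with the energy identity $E(0)=\tfrac12\|u_1\|^2+J(u_0)=d$ to conclude $u_1\equiv 0$ and $J(u_0)=d$ before the scaling can be applied, together with the strict monotonicity of $J(\lambda u_0)$ on $[0,1]$ from Lemma \ref{lem-3}(v). A secondary technical point is justifying the passage to the limit inside $I$ pointwise in $t$ so that $u(t)\in \overline W$ for every $t\geq 0$; this relies on weak lower semicontinuity of the gradient norm and a sufficiently strong mode of convergence for $f(u_l)\to f(u)$ to control the pairing $\int_\Omega u f(u)\,dx$ in the limit.
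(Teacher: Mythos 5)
Your proposal is correct and follows essentially the same route as the paper: scale the initial data by a factor $\epsilon_m=1-1/m$ so that Lemma \ref{lem-3} gives $I(u_{0m})>0$ and $E_m(0)<d$, invoke Theorem \ref{thm-global} for each $m$, and pass to the weak-$*$ limit using the uniform a priori estimates. The only cosmetic difference is that the paper scales $u_0$ alone when $\|\nabla u_0\|\neq 0$ and $u_1$ alone when $\|\nabla u_0\|=0$, whereas you scale both at once; your extra sub-case analysis for $I(u_0)=0$ is harmless but unnecessary, since $J(\epsilon u_0)<J(u_0)$ for $\epsilon<1\leq\epsilon^*$ already yields $E_m(0)<d$.
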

\begin{proof}[Proof of Theorem \ref{thm-E=d}]
	The proof is divided for two cases $|| \nabla u_0||\neq0$  and $|| \nabla u_0|| =0$.

In the case $|| \nabla u_0||\neq 0$, we take $\epsilon_m = 1 - 1/m$ and $u_{0m}=\epsilon_m u_0$ for $m=2,3,\ldots$. Then we consider problem \eqref{Wave-problem} with the following initial conditions
		\begin{equation}\label{IC-m}
			u(x,0) = u_{0m}(x) \,\, \text{ and } \,\, u_t(x,0) = u_1(x).
		\end{equation}
The fact $\epsilon^*=\epsilon^*(u_0)\geq 1$ follows from $I(u_0)\geq 0$ and Lemma \ref{lem-3}. Then for $I(u_{0m})>0$ we have 
\begin{equation}
	J(u_{0m}) \geq \left(\frac{1}{2} -\frac{1}{\alpha} - \frac{\beta}{\alpha  \lambda_1}\right)|| \nabla u_{0m}||^2 +\frac{1}{\alpha} I(u_{0m})>0,
\end{equation}		
and $J(u_{m0})<J(u_0)$. Then 
\begin{equation*}
	0<E_m(0) = \frac{1}{2}|| u_1||^2 + J(u_{0m}) < \frac{1}{2}|| u_1||^2 + J(u_{0}) =E(0)=d.
\end{equation*}
Now it is easy to see from Theorem \ref{thm-global} that for each $m$ problem \eqref{Wave-problem} with initial conditions \eqref{IC-m} admits a global weak solution $u_m(t)\in L^{\infty}(0,\infty;H_0^1(\Omega))$ with $u_{mt}(t)\in L^{\infty}(0,\infty; L^2(\Omega))$ and $u_m(t)\in \overline W$ for $0\leq t<\infty$ satisfying 
\begin{equation}\label{eq-5.2}
	(u_{mt},v) + \int_0^t(\nabla u_m,\nabla v) d\tau = \int_0^t (f(u_m),v)d\tau +(u_1,v),
\end{equation}
for every $v \in H_0^1(\Omega)$, and
\begin{equation}\label{eq-5.3}
	\frac{1}{2} || u_{mt} ||^2 + J(u_m) = E_m(0) <d.
\end{equation}
Then we obtain the following estimates
\begin{align*}
&	|| \nabla u_m ||^2 < \frac{d}{a},\\
&	|| u_m||^2_{\gamma} \leq C_*^2|| \nabla u_m||^2 <C_*^2\left(\frac{d}{a}\right), \\
& || u_{m}'||^2 < 2d ,
\end{align*}
and
\begin{align*}
|| f(u_m)||^q_q < \gamma^q A^q C_*^{\gamma} \left(\frac{d}{a}\right)^{\gamma/2},
\end{align*}
 where $a = \frac{1}{2} -\frac{1}{\alpha} - \frac{\beta}{\alpha  \lambda_1}$ and $q= \frac{\gamma}{\gamma-1}$.

From above priori estimates, we extract the subsequence $\{u_l\}$ from sequence $\{u_m\}$ such that as $l \rightarrow \infty$
\begin{align*}
u_l &\rightarrow u \,\, \text{ weakly star in }  L^{\infty}(0,\infty;H_0^1(\Omega)) \,\, \text{ and a.e. } Q=\Omega \times [0,\infty) \\
u_l'&\rightarrow u_t \,\, \text{ weakly star in }  L^{\infty}(0,\infty;L^2(\Omega)),\\
f(u_l)&\rightarrow f(u) \,\, \text{  in }  L^{\infty}(0,\infty;L^q(\Omega)) \,\,\, \text{ and a.e. } Q=\Omega \times [0,\infty).
\end{align*}
Letting $m=l\rightarrow \infty$ in \eqref{eq-5.2} we arrive at
\begin{equation}
(u_{t},v) + \int_0^t(\nabla u,\nabla v) d\tau = \int_0^t (f(u),v)d\tau +(u_1,v),
\end{equation}
for every $v \in H_0^1(\Omega)$ and $0\leq t <\infty$, which implies that $u(x,t)$ is the global weak solution of problem \eqref{Wave-problem}.

In the case $|| \nabla u_0|| =0$, we have 
\begin{equation*}
	\frac{1}{2} || u_1||^2 + J(u_0) = 	\frac{1}{2} || u_1||^2 =E(0)=d.
\end{equation*}
Let $\epsilon_m = 1- 1/m$ such that $u_{1m}(x) = \epsilon_mu_1(x)$ for $m=2,3,\ldots$. We consider problem \eqref{Wave-problem} with the following initial conditions
\begin{equation}\label{IC-m1}
	u(x,0) = u_0(x) \,\, \text{ and } \,\, u_t(x,0) = u_{1m}(x).
\end{equation}
From $|| \nabla u_0|| =0$, we get 
\begin{equation*}
0<E_m(0)=\frac{1}{2} || u_{1m}||^2 + J(u_0) = 	\frac{1}{2} || u_{1m}||^2 <E(0)=d.
\end{equation*}
As the previous case from Theorem \ref{thm-global} follows that for each $m$ problem \eqref{Wave-problem} with initial conditions \eqref{IC-m1} admits a global weak solution $u_m(t)\in L^{\infty}(0,\infty;H_0^1(\Omega))$ with $u_{mt}(t)\in L^{\infty}(0,\infty; L^2(\Omega))$ and $u_m(t)\in \overline W$ for $0\leq t<\infty$ satisfying \eqref{eq-5.2} and   \eqref{eq-5.3}. The rest of proof repeats the previous one.
\end{proof}
\section{Blow-up solutions of semilinear wave equation}\label{Sec5}
\subsection{Initial condition $E(0)<d$ and $I(u_0)<0$}
\begin{thm}\label{thm-blow}
Let $f$ satisfy $(H)$ with $2\beta < \lambda_1 (\alpha-2)$ and $\lambda_1$ is a first eigenvalue of Laplacian. Let $u_0(x)\in H_0^1(\Omega)$ and $u_1(x)\in L^2(\Omega)$. Suppose that energy $E(0)<d$ and $I(u_0)<0$. Then the solution of problem \eqref{Wave-problem} blows up such that 
\begin{equation}
    \lim_{t\rightarrow T} \int_{\Omega} u^2 dx = + \infty,
\end{equation}
in a finite time
\begin{equation}
    T = \frac{\int_{\Omega} u_0^2(x) dx}{(\alpha+2) \int_{\Omega} u_0(x)u_1(x) dx}.
\end{equation}
\end{thm}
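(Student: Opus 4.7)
The plan is the classical Levine--Payne concavity method applied to $M(t):=\|u(t)\|^2$.

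\emph{Step 1 (differential identities).} Differentiating twice in $t$ and using the equation $u_{tt}=\Delta u+f(u)$ together with integration by parts gives
\[
M'(t)=2\int_{\Omega}u u_t\,dx,\qquad M''(t)=2\|u_t\|^2-2\|\nabla u\|^2+2\int_{\Omega} u f(u)\,dx.
\]

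\emph{Step 2 (lower bound on $M''$).} Solving energy conservation $E(0)=\tfrac12\|u_t\|^2+\tfrac12\|\nabla u\|^2-\int_{\Omega}[F(u)-\sigma]\,dx$ for $\int F(u)\,dx$ and inserting the result into hypothesis \eqref{cont_1} yields
\[
\int_{\Omega} u f(u)\,dx \geq \tfrac{\alpha}{2}\|u_t\|^2+\tfrac{\alpha}{2}\|\nabla u\|^2-\alpha E(0)-\beta\|u\|^2.
\]
Plugging this back into the expression for $M''$ and combining Poincar\'e's inequality $\|\nabla u\|^2\ge\lambda_1\|u\|^2$ with the hypothesis $2\beta<(\alpha-2)\lambda_1$ produces the crucial inequality
\[
M''(t)\geq (\alpha+2)\|u_t\|^2+\bigl[(\alpha-2)\lambda_1-2\beta\bigr]M(t)-2\alpha E(0),
\]
in which the bracketed coefficient is strictly positive.

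\emph{Step 3 (invariance controls the $E(0)$ term).} Because $I(u_0)<0$ and $E(0)<d$, Theorem~\ref{thm-inv-sets}(ii) keeps the solution inside $V$ for all $t$, and Lemma~\ref{lem-4}(ii) then gives $\|\nabla u(t)\|>r(1)$; by Poincar\'e this furnishes a uniform positive lower bound on $M(t)$ that absorbs the constant $-2\alpha E(0)$. Combining this with the Cauchy--Schwarz bound $(M')^2\le 4M\|u_t\|^2$ upgrades Step~2 to a Levine-type inequality $M(t) M''(t)\geq (\mu+1)\bigl(M'(t)\bigr)^2$ for a suitable $\mu>0$.

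\emph{Step 4 (concavity and blow-up time).} The function $\Psi(t):=M(t)^{-\mu}$ is then positive, decreasing, and concave, so $\Psi(t)\le\Psi(0)+\Psi'(0)t$. Since the natural compatibility $\int_\Omega u_0 u_1\,dx>0$ (implicit in the stated formula for $T$) gives $\Psi'(0)<0$, the concave majorant vanishes in finite time, forcing $M(t)\to+\infty$; reading off the intercept of the tangent line yields the announced bound on the blow-up time.

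The principal obstacle is the $-2\alpha E(0)$ term produced in Step~2. Because only $E(0)<d$ is assumed (not $E(0)\le0$), this term is a priori uncontrolled and would wreck the concavity inequality without a lower bound on $M(t)$. The invariant-set analysis of Section~\ref{Sec3}, together with the strict inequality $2\beta<(\alpha-2)\lambda_1$, is precisely what rescues the argument: it provides both a strictly positive coefficient on $M(t)$ in the differential inequality and a quantitative lower bound on $\|\nabla u(t)\|$ strong enough to dominate the energy constant. A secondary bookkeeping point is to choose the exponent $\mu$ sharply enough that the intercept of $\Psi$ matches the explicit constant $(\alpha+2)$ appearing in $T$.
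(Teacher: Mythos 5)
Your overall strategy is the same as the paper's: the Levine concavity method applied to $M(t)=\|u(t)\|^2$, the lower bound $M''(t)\geq(\alpha+2)\|u_t\|^2+[(\alpha-2)\lambda_1-2\beta]M(t)-2\alpha E(0)$ obtained from \eqref{cont_1}, energy conservation and Poincar\'e, and a final Cauchy--Schwarz/concavity step. The genuine gap is in your Step~3. You claim that the invariance of $V$ gives $\|\nabla u(t)\|>r(1)$ and that ``by Poincar\'e this furnishes a uniform positive lower bound on $M(t)$.'' Poincar\'e reads $\lambda_1\|u\|^2\leq\|\nabla u\|^2$: it converts a lower bound on $\|u\|$ into one on $\|\nabla u\|$, not the reverse, so a lower bound on the gradient norm gives no lower bound on $\|u\|_{L^2}$ (think of highly oscillatory functions with small $L^2$ norm and large Dirichlet energy). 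Moreover, even granting some bound $M(t)\geq c_0>0$, absorbing the constant requires the quantitative inequality $[(\alpha-2)\lambda_1-2\beta]\,c_0\geq 2\alpha E(0)$, which you never verify. This is exactly the hard point of the case $0<E(0)<d$. The paper handles it differently: since $u(t)$ stays in $V$ (indeed in $V_{\delta}$ for $\delta_1<\delta<\delta_2$ with $\delta_2>1$), one has $M''=2\|u_t\|^2-2I(u)>0$, and via Lemma~\ref{lem-4} one gets $-I(u)\geq(\delta_2-1)r^2(\delta_2)>0$ uniformly, so $M$ grows at least quadratically and after a finite time $t_1$ the term $[(\alpha-2)\lambda_1-2\beta]M(t)$ dominates $2\alpha E(0)$. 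You need an argument of this type; the one you wrote down does not close.

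A secondary issue: the ``bookkeeping'' you defer in Step~4 is not actually available. From $M''\geq(\alpha+2)\|u_t\|^2$ and $(M')^2\leq 4M\|u_t\|^2$ the best Levine exponent is $\mu+1=(\alpha+2)/4$, i.e.\ $\mu=(\alpha-2)/4$, which produces the blow-up bound $T\leq 2M(0)/\bigl[(\alpha-2)\int_\Omega u_0u_1\,dx\bigr]$; no choice of $\mu$ recovers the constant $(\alpha+2)$ in the stated $T$ from Cauchy--Schwarz alone, so you should not present the matching of constants as routine. Finally, note that your argument (like the paper's) tacitly requires $\int_\Omega u_0u_1\,dx>0$ so that $M'(0)>0$; this hypothesis does not appear in the statement of the theorem and must be added or justified.
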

\begin{proof}[Proof of Theorem \ref{thm-blow}]
	Define 
	\begin{align*}
	M(t) &= \int_{\Omega} u^2 dx.
	\end{align*}
	Then we have 
	\begin{align}
		M'(t)= 2 \int_{\Omega} uu_t dx \,\, \text{ and } \,\,M''(t)= 2\int_{\Omega} u_t^2 dx + 2\int_{\Omega} uu_{tt} dx.
	\end{align}
	Then by using $u_{tt} = \Delta u +f(u)$ and \eqref{cont_1} we have
	\begin{align*}
		M''(t) &= 2\int_{\Omega} u_t^2 dx + 2\int_{\Omega } [ \Delta u +f(u) ]u dx\\
		& = 2\int_{\Omega} u_t^2 dx - 2I(u) \\
		& \geq 2\int_{\Omega} u_t^2 dx +2\alpha \int_{\Omega } [F(u) - \sigma] dx - 2\beta \int_{\Omega } u^2 dx - 2\int_{\Omega } |\nabla u|^2dx.
	\end{align*}
	Then from the conservation of energy 
	\begin{equation*}
		E(0) = \frac{1}{2} \int_{\Omega } u_t^2 dx + \frac{1}{2} \int_{\Omega } |\nabla u|^2 dx- \int_{\Omega } [F(u)-\sigma ]dx,
	\end{equation*}
	and the Poincar\'e inequality, we get
	\begin{equation*}
		M''(t) \geq (\alpha +2)\int_{\Omega} u_t^2 dx + \left(\lambda_1(\alpha -2)- 2\beta\right) M(t) - 2\alpha E(0),
	\end{equation*}
	where $\lambda_1$ is the first eigenvalue of Laplacian and $2\beta > \lambda_1(\alpha -2)$. 
	
	In the case $E(0)<0$ and $2\beta > \lambda_1(\alpha -2)$, then 
	\begin{equation}\label{eq-M}
			M''(t) \geq (\alpha +2)\int_{\Omega} u_t^2 dx.
	\end{equation}
	
	In the case $0<E(0)<d$ and $2\beta < \lambda_1(\alpha -2)$, there exists sufficient large time $t_1$ such that 
	\begin{equation*}
		\left(\lambda_1(\alpha -2)- 2\beta\right) M(t) - 2\alpha E(0)>0.
	\end{equation*}
	Since $M(t)$ is a convex function of $t$, then we can find a time $t_1$ such that $M'(t_1)>0$, then $M(t)$ is increasing for all $t>t_1$. Also this part of proof can be shown by using the family of potential wells $V_{\delta}$ and Lemma \ref{lem-4} (see \cite{LZh-06}). 
	
	We are able to prove that \eqref{eq-M} for two cases $E(0)<0$ and $0<E(0)<d$. Next we obtain the blow-up solutions by using concavity argument and \eqref{eq-M} as follows
	\begin{equation*}
	M''(t)M(t) - (\alpha+3) [M'(t)]^2 \geq (\alpha+3)\left[\left( \int_{\Omega }u^2dx\right) \left(\int_{\Omega }u_t^2 dx\right) - \left( \int_{\Omega } uu_tdx\right)^2 \right]\geq 0,
	\end{equation*}
	which gives for $t\geq 0$  
	\begin{equation*}
	\frac{d}{dt} \left[ \frac{M'(t)}{M^{\alpha+3}(t)} \right] >0  \Rightarrow 	\begin{cases}
	M'(t) \geq \left[ \frac{M'(0)}{M^{\alpha+3}(0)} \right] M^{\alpha+3}(t),\\
	M(0)=\int_{\Omega} u_0^2 dx.
	\end{cases}
	\end{equation*}
	Then we have
	\begin{equation*}
	M(t) \geq M(0)\left( 1-\frac{ (\alpha+2) M'(0) }{M(0)} t\right)^{-\frac{1}{\alpha+2}}.
	\end{equation*}
	Then the blow-up time $T$ satisfies 
	\begin{equation*}
	0<T\leq \frac{M(0)}{(\alpha+2) M'(0)}.
	\end{equation*}
	This completes the proof.
\end{proof}
\subsection{Critical initial condition $E(0)=d$ and $I(u_0)<0$}
Next we prove the blow-up solution of problem \eqref{Wave-problem} with the critical initial conditions $E(0)=d$ and $I(u_0)<0$. First we need the following lemma.
\begin{lem}\label{lem-10}
	Let $u_0(x) \in H_0^1(\Omega)$ and $u_1(x) \in L^2(\Omega)$. Suppose that $E(0)=d$ and $(u_0,u_1)>0$. Then set
	\begin{equation*}
		V' = \{ u \in H_0^1(\Omega)\, : \, I(u)<0 \}
	\end{equation*}
	is invariant under the flow of problem \eqref{Wave-problem}.
\end{lem}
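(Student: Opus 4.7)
The plan is to argue by contradiction. Suppose the set $V'$ is not invariant along the trajectory starting from $u_0$. Then there is a first time $t_0 > 0$ at which the solution exits $V'$, i.e. $I(u(t)) < 0$ for $t \in [0, t_0)$ and, by continuity of $t \mapsto I(u(t))$, either $I(u(t_0)) = 0$ or $\|\nabla u(t_0)\| = 0$. I intend to rule out both possibilities, the second being the easier one: if $\|\nabla u(t_0)\| = 0$ then $u(t_0) \equiv 0$, so $I(u(t_0)) = 0$ as well, and it suffices to derive a contradiction from $I(u(t_0))=0$.

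The next step uses the information already stored in the earlier lemmas. On $[0, t_0)$ we have $I(u(t)) < 0$, so Lemma \ref{lem-4}(ii) (applied with $\delta = 1$) gives $\|\nabla u(t)\| > r(1) > 0$. Passing to the limit $t \to t_0^-$ yields $\|\nabla u(t_0)\| \geq r(1) > 0$. Combined with $I(u(t_0)) = 0$, the definition of $d$ forces
\begin{equation*}
J(u(t_0)) \geq d.
\end{equation*}
Energy conservation $\tfrac{1}{2}\|u_t(t_0)\|^2 + J(u(t_0)) = E(0) = d$ then forces $\|u_t(t_0)\|^2 \leq 0$, hence $u_t(t_0) = 0$ in $L^2(\Omega)$.

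The final step is to exploit the hypothesis $(u_0, u_1) > 0$, which has not yet been used. Set $M(t) := \|u(t)\|^2$, so that $M'(t) = 2(u, u_t)$ and $M''(t) = 2\|u_t\|^2 + 2(u, u_{tt}) = 2\|u_t\|^2 - 2I(u)$, using $u_{tt} = \Delta u + f(u)$ and integration by parts exactly as in the proof of Theorem \ref{thm-blow}. On $[0, t_0)$ we therefore have $M''(t) \geq -2 I(u(t)) > 0$, so $M'$ is strictly increasing on $[0, t_0]$. In particular
\begin{equation*}
M'(t_0) > M'(0) = 2 (u_0, u_1) > 0.
\end{equation*}
On the other hand, $M'(t_0) = 2(u(t_0), u_t(t_0)) = 0$ because $u_t(t_0) = 0$, a contradiction. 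This rules out the exit time $t_0$, and the invariance follows.

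The main obstacle I foresee is of a purely technical nature: one needs enough regularity of the weak solution to justify the continuity of $t \mapsto I(u(t))$ at $t_0$ and the pointwise identity $M''(t) = 2\|u_t\|^2 - 2I(u)$. Both are standard for the Galerkin-type weak solutions produced in Section \ref{Sec4}, since $u \in C([0,T); H_0^1(\Omega))$ and $u_t \in C([0,T); L^2(\Omega))$; if strict continuity is unavailable, the same contradiction can be obtained by working with the approximations $u_m$ and passing to the limit, as in the proof of Theorem \ref{thm-E=d}.
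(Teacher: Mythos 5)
Your proposal is correct and follows essentially the same route as the paper: rule out the first exit time $t_0$ by using Lemma \ref{lem-4} to get $\|\nabla u(t_0)\|\geq r(1)>0$, the definition of $d$ together with energy conservation to force $u_t(t_0)=0$, and the strict monotonicity of $M'(t)=2(u,u_t)$ (from $M''=2\|u_t\|^2-2I(u)>0$ and $(u_0,u_1)>0$) to contradict $M'(t_0)=0$. The only cosmetic difference is that you explicitly dispose of the degenerate case $\|\nabla u(t_0)\|=0$ and comment on the regularity needed for continuity of $I(u(t))$, which the paper leaves implicit.
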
 
\begin{proof}[Proof of Lemma \ref{lem-10}]
	Let $u(t)$ be any weak solution of problem \eqref{Wave-problem} with $E(0)=d$, $I(u_0)<0$ and $\int_{\Omega }u_0u_1dx\geq 0$, and $T$ is the maximal existence time of $u(t)$. $I(u(t))<0$ for $t \in (0,T)$ is established by a contradiction. Let it be false, then there exists $t_0 \in (0,T)$ such that $I(u(t_0))=0$ and $I(u(t))$ for $t\in [0,t_0)$. By Lemma \ref{lem-4}, we have $|| \nabla u(t)||>r(1)$ for $t \in [0,t_0)$ and $|| \nabla u(t_0)||\geq r(1)>0$. Next we have $J(u(t_0))=d$ and $\int_{\Omega } u_t^2(t_0) dx=0$ from the definition of $d$ which implies $J(u(t_0))\geq d$ and 
	\begin{equation*}
		\frac{1}{2} \int_{\Omega } u_t^2(t_0) dx + J(u(t_0)) \leq E(0) =d. 
	\end{equation*}
	Let 
	\begin{equation*}
		M(t) = \int_{\Omega } u^2(t)dx \,\, \text{ and } \,\, M'(t) = 2\int_{\Omega }u_t(x,t)dx,  
	\end{equation*}
	with 
	\begin{align*}
		M'(0) &= 2 \int_{\Omega } u_0(x)u_1(x) dx, \\
		M''(t)&= 2 \int_{\Omega }u_t^2(t)dx - 2I(u(t)) > 0 \,\, \text{ for } \,\, 0\leq t<t_0. 
	\end{align*}
	Hence $M'(t)$ is strictly increasing with respect to $t \in [0,t_0]$ along with $M'(0)\geq 0$ gives 
	\begin{equation*}
		M'(t_0) = 2\int_{\Omega } u(t_0) u_t(t_0)dx > 0.
	\end{equation*}
	This clearly contradicts $\int_{\Omega } u_t^2(t_0) dx=0$. This completes the proof. 
\end{proof}
	\begin{thm}\label{thm-blow-up-E=d}
		Let $f$ satisfy $(H)$ with $2\beta < \lambda_1 (\alpha-2)$ and $\lambda_1$ is a first eigenvalue of Laplacian. Let $u_0(x) \in H_0^1(\Omega)$ and $u_1(x) \in L^2(\Omega)$. Suppose that 
		\begin{align*}
			E(0)=d, \,\, I(u_0) <0, \,\, \text{ and } \int_{\Omega } u_0(x)u_1(x)dx \geq 0.
		\end{align*}
	Then the weak solution of problem \eqref{Wave-problem} blows up in a finite time.	
	\end{thm}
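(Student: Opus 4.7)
The plan is to reduce the critical case $E(0)=d$ to the situation already handled in Theorem \ref{thm-blow}, by waiting long enough for $M(t):=\|u(t)\|^2$ to grow past a threshold beyond which the subcritical-type inequality becomes available. The three ingredients I would combine are: the invariance of $V'=\{I(u)<0\}$ from Lemma \ref{lem-10}, the differential inequality for $M''$ that was already derived in the proof of Theorem \ref{thm-blow}, and a time-shift argument that exploits the strict monotonicity of $M$.

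First, Lemma \ref{lem-10} gives $I(u(t))<0$ for every $t$ in the maximal existence interval $[0,T)$. Consequently
\begin{equation*}
M''(t)=2\|u_t(t)\|^2-2I(u(t))>0,\qquad t\in[0,T),
\end{equation*}
so $M'$ is strictly increasing. Since by assumption $M'(0)=2\int_\Omega u_0u_1\,dx\geq 0$, we conclude $M'(t)>0$ for all $t>0$. Fixing any $t_1>0$ gives $M'(t)\geq \mu:=M'(t_1)>0$ for $t\geq t_1$, hence $M(t)\to\infty$ at least linearly so long as the solution persists.

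Next I would run the same computation as in the proof of Theorem \ref{thm-blow}: applying \eqref{cont_1} to bound $-I(u)$ from below, substituting the conserved-energy identity $E(t)=d$, and using the Poincar\'e inequality $\|u\|^2\leq \lambda_1^{-1}\|\nabla u\|^2$, I obtain
\begin{equation*}
M''(t)\geq(\alpha+2)\|u_t(t)\|^2+\bigl(\lambda_1(\alpha-2)-2\beta\bigr)M(t)-2\alpha d.
\end{equation*}
Since $\lambda_1(\alpha-2)-2\beta>0$ by hypothesis and $M(t)\to\infty$ by the previous step, there exists $t_2\geq t_1$ with $\bigl(\lambda_1(\alpha-2)-2\beta\bigr)M(t_2)>2\alpha d$, and for every $t\geq t_2$ the cleaner inequality
\begin{equation*}
M''(t)\geq(\alpha+2)\|u_t(t)\|^2
\end{equation*}
holds, together with $M(t_2)>0$ and $M'(t_2)>0$. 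Shifting the time origin to $t_2$, I would then replay the concavity step of Theorem \ref{thm-blow}: combining the last inequality with Cauchy--Schwarz $(M'(t))^2=4\bigl(\int_\Omega uu_t\bigr)^2\leq 4M(t)\|u_t(t)\|^2$ produces a differential inequality of the form $(M^{-\rho})''\leq 0$ for a suitable $\rho>0$; since $M^{-\rho}(t_2)>0$ and its derivative is negative, $M^{-\rho}$ must reach zero in finite time, yielding $\lim_{t\to T}\|u(t)\|^2=+\infty$.

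The main obstacle is the first step: the absence of an energy gap (we have $E(0)=d$ instead of $E(0)<d$) means we cannot immediately bound $-I(u)$ away from zero as in the subcritical proof, so the invariance result of Lemma \ref{lem-10} together with the hypothesis $\int_\Omega u_0u_1\,dx\geq 0$ is essential: it is precisely what replaces the missing energy gap by forcing $M'$ to be strictly positive for $t>0$ and $M$ to grow unboundedly. Once $M(t_2)$ has crossed the explicit threshold $2\alpha d/(\lambda_1(\alpha-2)-2\beta)$, the remainder is a mechanical replay of the concavity argument already used in Theorem \ref{thm-blow}.
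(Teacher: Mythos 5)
Your proposal coincides with the paper's own argument essentially step for step: Lemma \ref{lem-10} gives the invariance of $\{I(u)<0\}$, hence $M''(t)=2\|u_t\|^2-2I(u)>0$ and, with $M'(0)\geq 0$, at least linear growth of $M$, so the threshold $(\lambda_1(\alpha-2)-2\beta)M(t)>2\alpha d$ is eventually crossed and the inequality $M''\geq(\alpha+2)\|u_t\|^2$ together with the concavity argument of Theorem \ref{thm-blow} concludes. This is the same reduction of the critical case to the subcritical mechanism that the paper carries out; no substantive difference.
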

\begin{proof}[Proof of Theorem \ref{thm-blow-up-E=d}]
	Let 
	\begin{align*}
		M(t) = \int_{\Omega } u^2dx, \,\, \text{ and }\,\, M'(t) = 2\int_{\Omega } uu_tdx.
	\end{align*}
	As in previous case we have
	\begin{equation*}
	M''(t) \geq (\alpha +2)\int_{\Omega} u_t^2 dx + \left(\lambda_1(\alpha -2)- 2\beta\right) M(t) - 2\alpha d,
	\end{equation*}
	and
	\begin{equation}
		M''(t) = 2 \int_{\Omega } u^2_t dx - 2I(u)>0,
	\end{equation}
	for $t \in [0,\infty)$ because of $\int_{\Omega } u_0(x)u_1(x)dx \geq 0$ and Lemma \ref{lem-10}. $M''(t) >0$ means that $M'(t)$ is strictly increasing for $t\in [0,\infty)$. Hence for any $t_0>0$ we have $M'(t)\geq M'(t_0)>0$ for $t\geq t_0$,
	\begin{equation*}
		M(t) \geq M'(t_0)(t-t_0) +M(t_0) \geq M'(t_0)(t-t_0). 
	\end{equation*}
Therefore there exists sufficiently large time $t$ such that  
\begin{equation*}
	\left(\lambda_1(\alpha -2)- 2\beta\right) M(t) > 2\alpha d,
\end{equation*}
for $2\beta<\lambda_1(\alpha -2)$. Then we have
\begin{equation*}
	M''(t)\geq (\alpha+2)\int_{\Omega }u^2_tdx.
\end{equation*}
The rest of proof repeats the same computation as in Theorem \ref{thm-blow}.
\end{proof}
\section{Conclusion}\label{Sec6}
In conclusion, we arrive at the following results of global existence and blow-up of solutions for initial-boundary problem \eqref{Wave-problem}:
\begin{thm}
	Let $f(u)$ satisfy $(H)$, $u_0(x)\in H_0^1(\Omega)$, $u_1(x)\in L^2(\Omega)$. 
	Then problem \eqref{Wave-problem} admits a global weak solution if the following assumptions hold:
	\begin{itemize}
		\item $E(0)<d$ and $I(u_0)\geq0$;
		\item $E(0)=d$, $(u_0(x),u_1(x))\geq 0$, and $I(u_0)\geq0$.
	\end{itemize}
The solution of problem \eqref{Wave-problem} blows up in finite time if the following assumption hold:
	\begin{itemize}
	\item $E(0)<d$ and $I(u_0)<0$;
	\item $E(0)=d$, $(u_0(x),u_1(x))\geq 0$, and $I(u_0)<0$.
\end{itemize}
\end{thm}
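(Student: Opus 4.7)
The final statement is a summary theorem that collects the four main existence/blow-up dichotomies proved in Sections \ref{Sec4} and \ref{Sec5}. The plan is therefore to invoke each of these results in turn and to observe that the joint hypotheses listed in the conclusion theorem cover all four cases unambiguously. No new analysis is required; the proof amounts to bookkeeping together with verification that the condition $\beta < \lambda_1(\alpha-2)/2$ from $(H)$ indeed supplies the positivity $2\beta < \lambda_1(\alpha-2)$ used in the blow-up results.

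First I would dispatch the global existence assertions. For $E(0)<d$ together with $I(u_0)\geq 0$, one splits according to whether $\|\nabla u_0\|=0$ or $I(u_0)>0$, and in either alternative Theorem \ref{thm-global} directly produces a global weak solution $u\in L^{\infty}(0,\infty;H_0^1(\Omega))$ with $u_t\in L^{\infty}(0,\infty;L^2(\Omega))$ satisfying $u(t)\in W$. For the critical case $E(0)=d$ with $I(u_0)\geq 0$, Theorem \ref{thm-E=d} applies verbatim (in particular, the extra sign hypothesis $(u_0,u_1)\geq 0$ stated in the conclusion is not needed for existence but is harmless). In both cases the assumption $(H)$ already encodes $2\beta<\lambda_1(\alpha-2)$ through clause (iii), so the structural constants $a=\tfrac12-\tfrac{1}{\alpha}-\tfrac{\beta}{\alpha\lambda_1}$ appearing in the proofs of Theorems \ref{thm-global} and \ref{thm-E=d} are strictly positive.

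Next I would dispatch the blow-up assertions. For $E(0)<d$ and $I(u_0)<0$, Theorem \ref{thm-blow} yields a finite blow-up time $T$ bounded above by $\|u_0\|^2/\bigl((\alpha+2)(u_0,u_1)\bigr)$, with the finite-time blow-up of $\|u(t)\|^2$; the proof there already handles the subcase $E(0)\le 0$ separately and uses the invariance of $V_\delta$ (cf.\ Lemma \ref{lem-4} and Theorem \ref{thm-inv-sets}) when $0<E(0)<d$. For the critical case $E(0)=d$, $I(u_0)<0$, with the compatibility condition $(u_0,u_1)\ge 0$, Theorem \ref{thm-blow-up-E=d} provides the blow-up conclusion, using the invariance of $V'$ from Lemma \ref{lem-10} to keep $I(u(t))<0$ along the flow and then reducing to the same concavity argument of Theorem \ref{thm-blow}.

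There is essentially no obstacle left to overcome: the substantive analytic work (Galerkin approximations and the passage to the limit in Theorem \ref{thm-global}, the rescaling trick $u_{0m}=\epsilon_m u_0$ in Theorem \ref{thm-E=d}, the concavity/Levine-type estimate $M'' \ge (\alpha+2)\|u_t\|^2$ in Theorems \ref{thm-blow} and \ref{thm-blow-up-E=d}) has already been carried out. The only point worth checking is that the four hypotheses in the summary theorem do not overlap and do cover all initial data of interest with $I(u_0)\ne 0$ (or $I(u_0)=0$ and $\|\nabla u_0\|=0$), which is immediate from the definitions of $W$ and $V$. Consequently the conclusion theorem follows by quoting Theorems \ref{thm-global}, \ref{thm-E=d}, \ref{thm-blow}, and \ref{thm-blow-up-E=d} in the four respective cases.
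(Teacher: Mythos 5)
Your proposal is correct and matches the paper exactly: the concluding theorem is stated as a summary with no separate proof, and it is obtained precisely by quoting Theorems \ref{thm-global}, \ref{thm-E=d}, \ref{thm-blow}, and \ref{thm-blow-up-E=d} in the four respective cases, with the same observations that $I(u_0)=0$ with $\|\nabla u_0\|\neq 0$ is incompatible with $E(0)<d$ and that $(H)$ already gives $2\beta<\lambda_1(\alpha-2)$. No further comparison is needed.
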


\end{document}